\theoremstyle{remark}
\newtheorem{para}{\bf}[section]
\theoremstyle{definition}
\newtheorem{exam}[para]{\bf Example}
\newtheorem{dfn}[para]{\bf Definition}
\theoremstyle{plain}
\newtheorem{thm}[para]{\bf Theorem}
\newtheorem{lemma}[para]{\bf Lemma}
\newtheorem{cor}[para]{\bf Corollary}
\newtheorem{prop}[para]{\bf Proposition}
\newenvironment{numequation}
{\addtocounter{enumi}{1}\begin{equation}}{\end{equation}}
\newenvironment{numeqnarray}
{\addtocounter{enumi}{1}\begin{eqnarray}}{\end{eqnarray}}
\newcommand{\cF}{{\mathcal F}}
\newcommand{\cH}{{\mathcal H}}
\newcommand{\cM}{{\mathcal M}}
\newcommand{\cN}{{\mathcal N}}
\newcommand{\cO}{{\mathcal O}}
\newcommand{\bB}{{\bf B}}
\newcommand{\bG}{{\bf G}}
\newcommand{\bL}{{\bf L}}
\newcommand{\bN}{{\bf N}}
\newcommand{\bP}{{\bf P}}
\newcommand{\bT}{{\bf T}}
\newcommand{\bU}{{\bf U}}
\newcommand{\bbC}{{\mathbb C}}
\newcommand{\bbG}{{\mathbb G}}
\newcommand{\bbN}{{\mathbb N}}
\newcommand{\bbQ}{{\mathbb Q}}
\newcommand{\bbR}{{\mathbb R}}
\newcommand{\bbZ}{{\mathbb Z}}
\newcommand{\frb}{{\mathfrak b}}
\newcommand{\frd}{{\mathfrak d}}
\newcommand{\frg}{{\mathfrak g}}
\newcommand{\frl}{{\mathfrak l}}
\newcommand{\frn}{{\mathfrak n}}
\newcommand{\frp}{{\mathfrak p}}
\newcommand{\frt}{{\mathfrak t}}
\newcommand{\fru}{{\mathfrak u}}
\newcommand{\frx}{{\mathfrak x}}
\newcommand{\GL}{{\rm GL}}
\newcommand{\Hom}{{\rm Hom}}
\newcommand{\Ad}{{\rm Ad}}
\newcommand{\ind}{{\rm ind}}
\newcommand{\Ind}{{\rm Ind}}
\newcommand{\Lie}{{\rm Lie}}
\newcommand{\lra}{\longrightarrow}
\newcommand{\midc}{\;|\;}
\newcommand{\Qp}{{\bbQ}_p}
\newcommand{\ra}{\rightarrow}
\newcommand{\Rep}{{\rm Rep}}
\newcommand{\sub}{\subset}
\newcommand{\supp}{{\rm supp}}
\newcommand\Zp{{{\bbZ}_p}}
\newcommand{\Pf}{{\it Proof. }}
\newcommand{\M}{\mathcal{M}}
\newcommand{\N}{\mathcal{N}}
\begin{document}

\title[On some non-principal locally  analytic representations]{On some non-principal locally  analytic representations induced by Whittaker modules}
\author{Sascha Orlik}
\address{Fakultät 4 - Mathematik und Naturwissenschaften, Bergische Universit\"at Wuppertal,
Gau{\ss}stra\ss{}e 20, D-42119 Wuppertal, Germany}
\email{orlik@uni-wuppertal.de}

\normalsize

\begin{abstract}
Let $G$ be a connected split adjoint semi simple $p$-adic Lie group. This paper can be seen as a continuation of \cite{O2} and is about the construction of locally analytic $G$-representations which do not lie in the principal series.
Here we consider locally analytic representations which are induced by Whittaker modules of the attached Lie algebra.
We prove that they are ind-admissible  and   topologically irreducible in case the Whittaker module is simple. On the other hand, we show that the naive Jacquet functor of these representations vanishes for all parabolic subgroups. However,  they do not satisfy the definition of supercuspidality in the  sense of Kohlhaase.
\end{abstract}

\maketitle

\tableofcontents

\section{Introduction}


Let $L$ be a finite extension of $\Qp$ with ring of integers $O_L$. Let $G = \bG(L)$ be the group of $L$-valued points of a  connected split  adjoint semi simple algebraic group $\bG$ over $O_L.$
In \cite{O2} we have considered some locally analytic $G$-representations which do not lie in the principal series as treated in \cite{OS2}. In fact those objects are globalized from cuspidal Lie algebra modules and satisfy
the supercuspidality criterion of Kohlhaase apart from the possible lack of irreducibility. In this paper we globalize amongst other things Whittaker modules and seek to prove analogous results as in loc.cit. 
Indeed we prove  that they are ind-admissible  and 
topologically irreducible when the Whittaker module is simple.
In contrast to loc.cit. they do not satisfy in general the the  homological vanishing criterion in the definition
of supercuspidality of Kohlhaase \cite{K2}. Here we exemplarily discuss the case of $\GL_2$  and show that the homology groups for the opposite unipotent subgroup do not vanish in degree 1. However, the naive (Hausdorff) Jacquet vanishes with respect to all parabolic subgroups.

We shall give now more details on the construction and statements. Let $G_0:={\bf G}(O_L)$  be the  maximal compact open subgroup of $O_L$-valued points and fix a maximal unipotent subgroup $U\subset G.$
Set $U_0=U\cap G_0$ and denote by
$\frg=\Lie(G)$ and $\fru=\Lie(U)$ the Lie algebras of $G$ and $U$, respectively. 
 Let $K$ be a finite extension of $L$ which serves as our coefficient field. Let $\eta:\fru \to K$ be a (non-degenerate) character which is induced by some locally $L$-analytic character $\hat{\eta}:U_0 \to K^\ast.$ We consider the category $\cM_\eta$ of all finitely generated $U(\frg)$-modules $M$ such that for each element $m\in M$ the expression $(x-\eta(x))^km$ vanishes for some integer $k\geq 1.$ Then the action of $\fru$ on $M$ integrates to a locally analytic action of $U_0$. 
 
 Let $D(G_0)$  be the $L$-analytic distribution algebra of $G_0$ and denote by $D(\frg,U_0)$ the subalgebra  of $D(G_0)$ generated by $\frg$ and $U_0$.  
Further we consider smooth admissible representations $V$ of $U_0.$ Then $M\otimes V'$ is a $D(\frg,U_0)$-module and we set 
$$\cF^G_{\hat{\eta}}(Z,V):=c-\Ind^G_{G_0} (D(G_0) \otimes_{D(\frg,U_0)} M \otimes V')'$$ where   $c-\Ind^G_{G_0}$ is the compact induction of locally analytic representations. Thus we have defined a functor
$$\cF^G_{\hat{\eta}}: \cM_\eta \times  \Rep^{\infty,adm}_K(U_0) \to \Rep^{la}_K(G)$$
into the category of locally analytic $G$-representations.

Our first result is the following theorem:

\vskip10pt
\noindent {\bf Theorem A.} {\it (i) The functor $\cF^G_{\hat{\eta}}$ is functorial and exact in all arguments: contravariant in $M$ and covariant in $V$.

\vskip8pt
\noindent  (ii) For all  $M\in \cM_\eta$ and for all smooth admissible representations $V$ of  $U_0$, the locally analytic representation $\cF^G_{\hat{\eta}}(M,V)$ is ind-admissible and in particular of compact type.

(Here we recall from \cite{O2} that a locally analytic representation is ind-admissible if it is a strict inductive limits of admissible representations when restricted to a compact open subgroup.)}

Concerning irreducibility statements for some of these representations we also have to consider the centre $Z(\frg)$ of $U(\frg)$. Let $\Theta:Z(\frg) \to K$ be an algebra homomorphism. 
We then set $N_{\Theta,\eta}:=U(\frg)\otimes_{Z(\frg)U(\fru)} K_{\Theta,\eta}$ which is an object of $\cM_\eta$. If $\eta$ is non-degenerate then Kostant \cite{Kos2} proved that this module is simple. It is up to isomorphism the only simple object in the subcategory of $\cM_\eta$ where the centre $Z(\frg)$ acts via $\Theta.$ 

\vskip10pt
\noindent {\bf Theorem B.} {\it 
Let $\Theta$ and $\eta$ as above.  Let $V$ be an  irreducible smooth $U_0$-representation, then $\cF^G_{\hat{\eta}}(N_{\Theta,\eta},V)$ is topologically irreducible.
}





\vskip10pt
The proof of the above theorems follows the same strategy as in the principal series case \cite{OS2} and \cite{O2}, respectively. More precisely, as for Theorem A part ii) we use a recent result of Agrawal and Strauch  \cite{AS} which applies to more general closed subgroups of $G_0.$ 
Concerning Theorem B we first treat the situation where $V$ is the trivial representation.
We prove as in \cite{O2} that the module $D(G_0) \otimes_{D(\frg,U_0)} N_{\Theta,\eta}$ is simple. Then we apply the levelwise Mackey Criterion of Bode \cite[Appendix]{O2} to prove the topologically irreducibility. If $V$ is arbitrary irreducible we mimic the proof of \cite{OS2} to deduce the statement.

As for the list of content we recall in section 2 some basic facts in locally analytic representation theory and some material developed in \cite{O2}, e.g.  the notion of ind-admissibility. Section 3 is devoted to review the necessary background on Whittaker modules over the field of complex numbers. Here we follow the paper of Mili\u{c}i\'c and Soergel \cite{MS1}. In section 4 we define our functor $\cF^G_{\hat{\eta}}$ and prove assertions i)-ii) from Theorem A above. The irreducibility result is shown in section 5. Finally the last section addresses the question of supercuspidality in the sense of Kohlhaase.

\vskip10pt

{\it Notation and conventions:} We denote by $p$ a prime number and consider fields $L \sub K$ which are both finite extensions of $\Qp$. 
Let $O_L$ and $O_K$ be the rings of integers of $L$, resp. $K$, and let $|\cdot |_K$ be the absolute value on $K$ such that $|p|_K = p^{-1}$. The field $L$ is our ''base field'', whereas we consider $K$ as our ''coefficient field''. For a locally convex $K$-vector space $V$ we denote by $V'_b$ its strong dual, i.e., the $K$-vector space of continuous linear forms equipped with the strong topology of bounded convergence. Sometimes, in particular when $V$ is finite-dimensional, we simplify notation and write $V'$ instead of $V'_b$. All finite-dimensional $K$-vector spaces are equipped with the unique Hausdorff locally convex topology.

We let $\bG_0$ be a split reductive group scheme over $O_L$ and $\bT_0 \sub \bB_0 \sub \bG_0$ a maximal split torus and a Borel subgroup scheme, respectively. We denote by $\bG$, $\bB$, $\bT$ the base change of $\bG_0$, $\bB_0$ and $\bT_0$ to $L$. By $G_0 = \bG_0(O_L)$, $B_0 = \bB_0(O_L)$, etc., and $G = \bG(L)$, $B = \bB(L)$, etc., we denote the corresponding groups of $O_L$-valued points and $L$-valued points, respectively. Standard parabolic subgroups of $\bG$ (resp. $G$) are those which contain $\bB$ (resp. $B$). For each standard parabolic subgroup $\bP$ (or $P$) we let $\bL_\bP$ (or $L_P$) be the unique Levi subgroup which contains $\bT$ (resp. $T$) and ${\bf U_P}$ its unipotent radical. The opposite unipotent radical is denoted by ${\bf U^-_P}.$  Finally, Gothic letters $\frg$, $\frp$, etc., will denote the Lie algebras of $\bG$, $\bP$, etc.: $\frg = \Lie(\bG)$, $\frt = \Lie(\bT)$, $\frb = \Lie(\bB)$, $\frp = \Lie(\bP)$, $\frl_P = \Lie(\bL_\bP)$, etc.. Base change to $K$ is usually denoted by the subscript ${}_K$, for instance, $\frg_K = \frg \otimes_L K$.

\vskip8pt

We make the general convention that we denote by $U(\frg)$, $U(\frp)$, etc., the corresponding enveloping algebras, {\it after base change to $K$}, i.e., what would be usually denoted by $U(\frg) \otimes_L K$, $U(\frp) \otimes_L K$ etc.
Similarly, we use the abbreviations $D(G) = D(G,K), D(P) = D(P,K)$ etc. for the locally $L$-analytic distributions with values in $K.$

\vskip10pt

{\it Acknowledgements.} 
I am very grateful to Andreas Bode for his helpful comments.



\vspace{1cm}
\section{Preliminaries on locally analytic representations}


\setcounter{enumi}{0}
 In the first half of this chapter we recall some basic facts on locally analytic representations as introduced by Schneider and Teitelbaum \cite{ST1}. In the second half we continue with some additional background on these representations established in \cite{O2}. 

For a locally $L$-analytic group $H$, let $C^{an}(H,K)$ be the locally convex vector space of locally $L$-analytic $K$-valued functions.  The dual space $D(H) = D(H,K) = C^{an}(H,K)'$ is a topological $K$-algebra which has the structure of a Fr\'echet-Stein algebra when $H$ is compact \cite{ST2}.
More generally, if $V$ is a Hausdorff locally convex $K$-vector space, let $C^{an}(H,V)$ be the $K$-vector space consisting of locally analytic functions with values in $V$. It has the structure of a Hausdorff locally convex vector space, as well.

A locally analytic $H$-representation is a Hausdorff barrelled  locally convex $K$-vector space together with a homomorphism $\rho: H \ra \GL_K(V)$
such that the action of $H$ on $V$  is continuous and the orbit maps $\rho_v: H \rightarrow V, \; h \mapsto \rho(h)(v)$, are
elements in $C^{an}(H,V)$ for all $v \in V$. 
We denote by $\Rep_K^{la}(H)$ the category of locally analytic $H$-representations on $K$-vector spaces where the morphisms are the continuous $H$-linear maps.

We recall that a Hausdorff locally convex $K$-vector space $V$ is called of {\it compact type} if it is an inductive limit of countably many Banach spaces with injective and compact transition maps, cf. \cite{ST1}. In this case, the strong dual $V'_b$ is a nuclear Fr\'echet space, cf. \cite{S1}. We denote by $\Rep_K^{la,c}(H)$ the  full subcategory of $\Rep_K^{la}(H)$ consisting of objects of compact type.  By \cite[3.3]{ST1}, the duality functor gives an equivalence of categories

\begin{equation*}\begin{array}{ccc}\label{equivalence}

\Rep_K^{la,c}(H)

& \stackrel{\sim}{\longrightarrow} &

\left\{\begin{array}{c}
\mbox{separately continuous $D(H)$-}  \\
\mbox{modules on nuclear Fr\'echet} \\
\mbox{spaces with continuous  } \\
\mbox{$D(H)$-module maps}
\end{array} \right\}^{op}.

\end{array}
\end{equation*}

\vskip8pt

\noindent In particular, $V$ is topologically irreducible if $V'_b$ is a topologically simple $D(H,K)$-module. 

For any closed subgroup $H'$ of $H$ and any locally analytic
representation $V$ of $H'$, we denote by  
$$\Ind^H_{H'}(V) := \Big\{f \in C^{an}(H,V) \midc \forall h' \in H', \forall h \in H: f(h \cdot
h') = (h')^{-1} \cdot f(h) \; \Big\}$$
the induced locally analytic representation.
The group $H$ acts on this vector space by $(h \cdot f)(x) = f(h^{-1}x)$. If $V$ is of compact type and $H/H'$ is compact then $\Ind^H_{H'}(V)$ is of compact type again and  $\Ind^H_{H'}(V)' = D(H)\otimes_{D(H')}  V'_b$ is a nuclear Fr\'echet space.

If $H$ is second countable and $H'\subset H$ is a compact open subgroup, then we set
$$c-\Ind^H_{H'}(V):=\{f \in \Ind^H_{H'}(V) \mid f \mbox{ has compact support} \}.$$
This is an $H$-stable subspace, and for any $f\in c-\Ind^H_{H'}(V)$ there are  only finitely many elements $h_1,\ldots h_r$
such that $\supp(f) = \bigcup_{i=1}^r h_i H'.$
Since $H$ is second countable we may write $c-\Ind^H_{H'}(V)$ as a countable direct sum $\bigoplus_{g\in H/H'} h \cdot V$ and supply this space with the locally convex direct sum topology.
Then $c-\Ind ^H_{H'}(V)$ is barrelled, Hausdorff and the action is locally analytic so that we get a locally analytic $H$-representation, cf. \cite{O2}. The construction is functorial and we get a functor
$$c-\Ind^H_{H'}: \Rep_K^{la}(H') \to \Rep_K^{la}(H).$$ 
The functor $c-\Ind^H_{H'}$ is left adjoint to the restriction functor $$\Rep(H)_K^{la} \to \Rep_K^{la}(H').$$ 
The  representations $c-\Ind^H_{H'}(V)$  are of compact type by \cite[Prop 1.2. ii]{ST1} but in general not admissible. However, they are  ind-admissible \cite[Lemma A.20]{O2}:

\begin{dfn}\label{dfn_qa}
 A locally analytic $H$-representation $V$ is called ind-(strongly )admissible if there is some compact open subgroup $C\subset H$ such that $V_{|C}=\varinjlim_n V_n$ is a strict inductive limit of (strongly) admissible locally analytic $C$-representations $V_n.$
\end{dfn}

The above definition does not depend on the chosen compact open subgroup  cf.  \cite[Corollary A.14]{O2} for a proof. 
It turns out that the subspaces $V_n$ are closed subrepresentation of $V_{|C}$ for all $n\in \bbN.$ Moreover, $V$ is of compact type, cf. \cite[Lemma 2.4]{O2} and $V'_b=\varprojlim_n (V_n)'_b$ is the locally convex projective limit of the Fr\'echet spaces  ($V_n)'_b$. In particular $V'_b$ is a Fr\'echet space, cf. \cite[Proposition 2.5]{O2}.

\vspace{1cm}
\section{Whittaker Lie algebra representations}
In this section we recall the theory of Whittaker modules of a Lie algebra $\frg$ over the field of complex numbers $\bbC$ in the sense of Mili\u{c}i\'c  and Soergel \cite{MS1,MS2}. In particular we assume that $\frg$ is semi-simple. We thus replace here our $p$-adic field $L$ by $\bbC.$ Finally we denote by $Z(\frg)$ the centre of the universal enveloping algebra $U(\frg)$ of $\frg.$

Let $\frb\subset \frg$ be a Borel subalgebra and $\fru$ its nilradical. We let $\frt$ be a Cartan algebra inside $\frb.$ We denote by $\Phi \subset \frt^\ast$ the attached root system and by $\Phi^+$ resp. $\Phi^-$ its subset of positive resp. negative roots. Further we denote by $\Delta$ its set of simple roots.
Let $\rho=\frac{1}{2} \sum_{\alpha \in \Phi^+}\alpha\in \frt^\ast$. For every $\alpha \in \Phi^+$ let $x_\alpha\in \fru_\alpha$ and $y_\alpha\in \fru_{-\alpha}$ be the standard generators of the root spaces, respectively.

Following Mili\u{c}i\'c and Soergel \cite{MS2} we consider the following category. 

\begin{dfn}
 Let $\cN$ be the full subcategory of all $U(\frg)$-modules $M$ such that

i) $M$ is a finitely generated $U(\frg)$-module.

ii) $M$ is $U(\fru)$-finite.

iii) $M$ is $Z(\frg)$-finite.
\end{dfn}

The category $\cN$ is abelian and every object is of finite length \cite[Thm 2.5]{MS2}.

For a maximal ideal $\Theta\subset Z(\frg)$ we let $\cN_\Theta$ be the full subcategory of objects $M$ such that $\Theta M=0.$
Further $\cN_{\overline{\Theta}}$ be the full subcategory consisting of objects $M$ for which there is an integer $k\geq 1$  such that $\Theta^k M=0.$

For a character $\eta$ of $\fru$ we let 
$\cN_\eta$ be the subcategory of $\cN$ consisting of objects $M$ such that 
$M=\{m\in M\mid \mbox{there exists an integer $k\geq 1$ with } (x-\eta(x))^km=0 \mbox{ for all } x\in \fru\}.$
We set 
$$\cN_{\overline{\Theta},\eta}=\cN_{\overline{\Theta}} \cap \cN_\eta \mbox{ and } \cN_{\Theta,\eta}=\cN_\Theta \cap \cN_\eta.$$

The category $\cN$ behaves semi-simple:

\begin{lemma}We have 
 $\cN=\bigoplus_{\Theta\in {\rm Max} Z(\frg)} \cN_{\overline{\Theta}},\,\,\, $   $\cN=\bigoplus_{\eta \in \fru^\ast} \cN_\eta$ and $$\N=\bigoplus_{\Theta,\eta} \N_{\overline{\Theta},\eta}.$$
    \end{lemma}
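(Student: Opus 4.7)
\emph{Plan.} The strategy is to establish the three direct sum decompositions one after another: first the $\fru$-character decomposition (which is the most subtle), then the central character decomposition, and finally the combined one. Fix $M\in \cN$, and for each $\eta\in \fru^\ast$ set
$$M_\eta := \bigl\{m\in M\bigm| \mbox{for every } x\in\fru \mbox{ there is } N\geq 1 \mbox{ with } (x-\eta(x))^N m = 0\bigr\}.$$
Since $U(\fru)$ acts locally finitely on $M$ and $\fru$ is nilpotent, the standard theory of finite-dimensional modules over a nilpotent Lie algebra in characteristic zero produces, inside each finite-dimensional $U(\fru)$-submodule $V\subset M$, a decomposition $V = \bigoplus_\eta V_\eta$ as $U(\fru)$-modules; passing to the union yields $M = \bigoplus_\eta M_\eta$. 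Directness of this sum follows from the fact that any finite collection of distinct characters of $\fru$ can be separated by a single element $x\in \fru$ (the complement of finitely many hyperplanes in $\fru$ is non-empty over $\bbC$), on whose disjoint generalized $x$-eigenspaces the pieces $M_{\eta_i}$ live. Finitely many $\eta$'s contribute because $M$ is finitely generated as a $U(\frg)$-module.

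The crucial step is to verify that each $M_\eta$ is in fact a $U(\frg)$-submodule, not merely a $U(\fru)$-submodule, and this is where the main work lies. I would use the commutator identity
$$(x-\eta(x))^M\, y \;=\; \sum_{i=0}^{M} \binom{M}{i}\, (\ad x)^i(y)\,(x-\eta(x))^{M-i}$$
in $U(\frg)$, valid for any $x\in \fru$ and $y\in\frg$ because $\ad(x-\eta(x)) = \ad x$. Every $x\in \fru$ acts on $\frg$ by an $\ad$-nilpotent operator: the Lie subalgebra $\ad(\fru)\subset \End(\frg)$ is nilpotent, as can be read off from the weight decomposition of $\frg$ under a Cartan contained in $\frb$ on which each $\ad x_\alpha$ strictly raises height, so Engel's theorem supplies, for each $y\in\frg$, some $N$ with $(\ad x)^N(y)=0$. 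If $m\in M_\eta$ satisfies $(x-\eta(x))^k m = 0$, then choosing $M\geq k+N-1$ forces every summand on the right-hand side to annihilate $m$, whence $ym\in M_\eta$. Since $x\in \fru$ was arbitrary, $y\cdot M_\eta\subset M_\eta$.

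For the central character decomposition, $Z(\frg)$ is commutative and acts locally finitely on the finitely generated module $M$, so $M$ is annihilated by an ideal of $Z(\frg)$ of finite codimension; splitting this ideal into a finite product of local Artinian factors yields $M=\bigoplus_\Theta M_{\overline\Theta}$, where $\Theta$ runs over finitely many maximal ideals of $Z(\frg)$ and each summand is automatically $\frg$-stable since $Z(\frg)$ is central. For the joint decomposition I would apply the $\fru$-decomposition to each $M_{\overline\Theta}$ separately: the projector onto $M_{\overline\Theta}$ is $U(\frg)$-linear and hence commutes with the $\fru$-projectors, giving $M=\bigoplus_{\Theta,\eta} M_{\overline\Theta,\eta}$ with $M_{\overline\Theta,\eta}=M_{\overline\Theta}\cap M_\eta\in \cN_{\overline\Theta,\eta}$. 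The main obstacle throughout is the $\frg$-stability of $M_\eta$, for which the commutator identity combined with the ad-nilpotency of $\fru$ on $\frg$ is the decisive input.
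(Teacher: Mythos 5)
The paper's proof of this lemma consists entirely of a citation to Mili\u{c}i\'c--Soergel \cite[Lemmas 2.1 and 2.2]{MS2}, so there is no inline argument to compare against; your proof is a correct, self-contained reconstruction of the standard argument. The decisive ingredients you identify are exactly the right ones: the binomial identity $(x-\eta(x))^{M}y=\sum_{i}\binom{M}{i}(\ad x)^{i}(y)\,(x-\eta(x))^{M-i}$ in $U(\frg)$ (valid because left multiplication by $x-\eta(x)$ equals $\ad x$ plus right multiplication, two commuting operators), combined with $\ad$-nilpotency of $\fru$ on $\frg$, which together yield $\frg$-stability of the generalized $\eta$-weight space $M_\eta$; the Chinese Remainder splitting of the finite-codimension annihilator in $Z(\frg)$; and centrality of $Z(\frg)$ to make the two projector families commute. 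One small mismatch of conventions deserves a sentence: you define $M_\eta$ by requiring, for each $m$ and each $x\in\fru$ \emph{separately}, some exponent $N(x,m)$ with $(x-\eta(x))^N m=0$, while the paper's definition of $\cN_\eta$ requires a single $k(m)$ working uniformly in $x$. For $U(\fru)$-finite modules these coincide --- decompose the finite-dimensional submodule $U(\fru)m$ into generalized weight spaces and note the pointwise condition kills every off-$\eta$ component, after which $k=\dim U(\fru)m$ serves --- but you should say so explicitly. Finally, the generalized-weight decomposition you invoke for finite-dimensional modules over a nilpotent Lie algebra uses Lie's theorem and hence algebraic closedness of the ground field; this is available because the present section is set over $\bbC$, which is also why the direct citation to \cite{MS2} suffices for the paper.
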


\begin{proof}
 This follow from \cite[Lemma 2.1, Lemma 2.2]{MS2}. See also \cite{MS1}.
\end{proof}

\begin{exam}
 Let $\Theta$ be given by an algebra homomorphism $\psi_\Theta: Z(\frg) \to \bbC$. Then
$N_{\Theta,\eta}:=U(\frg) \otimes_{Z(\frg)U(\fru)} \bbC_{\psi_\Theta,\eta}$ is an object of $\cN_{\Theta,\eta}$ where $\bbC_{\psi_\Theta,\eta}$ is $\bbC$ with the module structure of $Z(\frg)U(\fru)$ given by $\psi_\Theta\cdot \eta.$
\end{exam}

The following theorem is due to Kostant.

\begin{thm}
 Let $\eta$ be non-degenerate, i.e.  $\eta(x_\alpha)\neq 0$ for all simple roots $\alpha\in \Delta$. Then $N_{\Theta,\eta}$ is a simple $U(\frg)$-module. 
\end{thm}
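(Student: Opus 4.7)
The strategy is to combine two facts: (i) every nonzero $U(\frg)$-submodule of an object of $\cN_\eta$ contains a Whittaker vector; and (ii) the space of Whittaker vectors in $N_{\Theta,\eta}$ is one-dimensional and spanned by the cyclic generator $w_0:=1\otimes 1$. These together with the cyclicity of $w_0$ give simplicity at once: any nonzero submodule $N\subset N_{\Theta,\eta}$ contains a Whittaker vector by (i), which must be a nonzero scalar multiple of $w_0$ by (ii), hence $N\supset U(\frg)\cdot w_0=N_{\Theta,\eta}$.

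First I would verify step (i), which is a general feature of objects in $\cN_\eta$. For any $0\neq m\in N$, the local $U(\fru)$-finiteness built into the definition of $\cN$ implies that $U(\fru)\cdot m$ is finite-dimensional. Since $\fru$ is nilpotent, hence solvable, Lie's theorem over $\bbC$ produces a common eigenvector $v\neq 0$ in $U(\fru)\cdot m\subset N$, and the condition defining $\cN_\eta$ forces its eigencharacter to equal $\eta$, so $v$ is a Whittaker vector.

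Step (ii) is the substantive input. Here I would invoke Kostant's structure theorem for the universal Whittaker module $Y_\eta:=U(\frg)\otimes_{U(\fru)}\bbC_\eta$: for non-degenerate $\eta$, the multiplication map
\[ U(\fru^{-})\otimes_{\bbC}Z(\frg)\;\stackrel{\sim}{\longrightarrow}\;Y_\eta,\qquad u\otimes z\;\longmapsto\;uz\cdot(1\otimes 1), \]
is an isomorphism of $\bigl(U(\fru^{-}),Z(\frg)\bigr)$-bimodules which identifies the Whittaker subspace $\mathrm{Wh}(Y_\eta)$ with $1\otimes Z(\frg)$. In particular $Y_\eta$ is free over $Z(\frg)$, so tensoring with $Z(\frg)/\Theta\cong\bbC$ yields $N_{\Theta,\eta}=Y_\eta/\Theta Y_\eta\cong U(\fru^{-})$, and under this identification $\mathrm{Wh}(N_{\Theta,\eta})$ corresponds to $1\otimes\bbC=\bbC\cdot w_0$, as required.

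The hard part is of course Kostant's theorem itself. Its proof rests on an analysis of the PBW filtration on $Y_\eta$, in which the non-degeneracy of $\eta$ is essential to identify the associated graded with a suitable polynomial ring; rather than reproducing this classical argument I would cite \cite{MS1}, where the result is stated and proved in precisely the form needed here.
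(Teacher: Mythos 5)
Your proposal is correct and matches the standard structure of Kostant's proof, which is what the paper's own proof --- a bare citation to \cite[Thm 3.6.1]{Kos2} --- ultimately delegates to: (i) any nonzero submodule of an object of $\cN_\eta$ contains a Whittaker vector, by Lie's theorem applied to the finite-dimensional $U(\fru)$-submodule generated by any element; (ii) $\mathrm{Wh}(N_{\Theta,\eta})$ is one-dimensional; (iii) the Whittaker vector is cyclic. One small warning on step (ii): the one-dimensionality of $\mathrm{Wh}(N_{\Theta,\eta})$ does not follow purely formally from the $(U(\fru^-),Z(\frg))$-bimodule isomorphism $Y_\eta\cong U(\fru^-)\otimes Z(\frg)$ by passing to the quotient $Y_\eta/\Theta Y_\eta$. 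Tensoring down only gives the containment $\mathrm{Wh}(N_{\Theta,\eta})\supseteq\bbC\cdot w_0$ for free; a Whittaker vector in the quotient need not lift to a Whittaker vector upstairs, so the reverse inclusion requires a separate argument. Kostant proves it directly, and indeed the paper cites \cite[Thm 3.4]{Kos2} for exactly this uniqueness statement in the remark immediately following the theorem. You correctly flag the hard input as coming from Kostant/Mili\u{c}i\'c--Soergel, so the gap is covered by your citation, but it is worth being aware that step (ii) is itself a nontrivial theorem and not a formal consequence of the freeness/bimodule statement alone.
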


\begin{proof}
 This is \cite[Thm 3.6.1]{Kos2}.
\end{proof}

The generator of $N_{\Theta,\eta}$ is up to scalar multiplication the only Whittaker vector in $N_{\Theta,\eta}$, i.e., a vector $v\in N_{\Theta,\eta}$ for which $x\cdot v=\eta(x)v$ for all $x\in \fru$, cf. \cite[Thm 3.4]{Kos2}.

The following theorem of Mili\u{c}i\'c and Soergel is also useful for  intrinsic reasons and will be applied in the next section. 

\begin{thm}\label{free}
 The universal enveloping algebra $U(\frg)$ is free over $Z(\frg)\otimes U(\fru)\cong Z(\frg)U(\fru).$
\end{thm}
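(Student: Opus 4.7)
My plan is to combine the Poincar\'e-Birkhoff-Witt theorem with the Harish-Chandra projection and Chevalley's theorem on finite reflection groups. The first step is to establish the algebra isomorphism $Z(\frg) \otimes U(\fru) \cong Z(\frg)U(\fru)$: since $Z(\frg)$ is central in $U(\frg)$, the multiplication map is automatically an algebra homomorphism onto $Z(\frg)U(\fru)$, and its injectivity will follow from passing to the associated graded with respect to the PBW filtration, where it becomes the natural map $S(\frg)^G \otimes S(\fru) \to S(\frg)$ (using $\gr Z(\frg) = S(\frg)^G$). This last map is injective because the dual morphism $\frg^* \to \frg^*/\!/G \times \fru^*$ is dominant.

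For the freeness of $U(\frg)$ over $Z(\frg)U(\fru)$, the key object is the quotient
$$M := U(\frg)/U(\frg)\fru,$$
which carries a $(U(\frg), Z(\frg))$-bimodule structure by centrality of $Z(\frg)$. By PBW, $M \cong U(\bar\frb) = U(\bar\fru) \otimes U(\frt)$ as a $\bbC$-vector space, and a weight argument for the $\frt$-equivariant PBW decomposition $U(\frg) = U(\bar\frb) \oplus U(\frg)\fru$ shows that the right action of $z \in Z(\frg)$ on $M$ is given by right multiplication by the Harish-Chandra projection $\beta(z) \in U(\frt)$. The classical Harish-Chandra theorem identifies $\beta$ with an isomorphism $Z(\frg) \cong U(\frt)^W$ (after the $\rho$-shift), and Chevalley's theorem on the Weyl group then gives that $U(\frt) = S(\frt)$ is a free $U(\frt)^W$-module of rank $|W|$. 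Consequently $M$ is free as a right $Z(\frg)$-module, with basis $U(\bar\fru) \otimes \mathcal H$ for any fixed $Z(\frg)$-basis $\mathcal H$ of $U(\frt)$.

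The last step is to lift this basis back to $U(\frg)$: choosing a PBW basis $\{f_\alpha\}$ of $U(\bar\fru)$ and lifts $\{h_\beta\} \subset U(\frt)$ of $\mathcal H$, the elements $\{f_\alpha h_\beta\} \subset U(\bar\frb) \subset U(\frg)$ are the candidate $Z(\frg) \otimes U(\fru)$-basis. Linear independence over $Z(\frg) \otimes U(\fru)$ is immediate from the freeness of $M$ over $Z(\frg)$ by passing to the quotient. Spanning I establish by induction on the $U(\fru)$-degree: given $x \in U(\frg)$, the PBW decomposition writes $x = x_0 + x_1$ with $x_0 \in U(\bar\frb)$ and $x_1 \in U(\frg)\fru$; one expresses $x_0$ as a combination $\sum f_\alpha h_\beta z_{\alpha\beta}$ with $z_{\alpha\beta} \in Z(\frg)$ modulo $U(\frg)\fru$ using the freeness of $M$, absorbs the resulting correction term into $x_1$, and applies the inductive hypothesis to the reduced $x_1$, which has strictly smaller $U(\fru)$-degree.

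The main obstacle will be this lifting step. The $Z(\frg)$-action on $M$ is through the Harish-Chandra image $\beta(z) \in U(\frt)$, whereas on $U(\frg)$ itself the action is by $z$ directly, and these differ by $z - \beta(z) \in U(\frg)\fru$. The induction must therefore carefully track this discrepancy through the PBW filtration on $U(\fru)$; since any fixed $z$ has bounded $U(\fru)$-degree when expanded in PBW form, the induction does terminate, but the compatibility with the module structure has to be verified at each step.
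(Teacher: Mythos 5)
The paper's ``proof'' is a bare citation of Mili\v{c}i\'c--Soergel, so any argument you supply is necessarily different from what is on the page; but your plan is in fact the standard way this statement is proved, and in outline it is correct. Passing to $M = U(\frg)/U(\frg)\fru$, observing via the adjoint $\frt$-weight decomposition that the $U(\fru)$-degree-zero PBW component of a central element lies in $U(\frt)$ (so that $z \equiv \xi(z) \bmod U(\frg)\fru$, with $\xi$ the Harish-Chandra projection), and then invoking Chevalley's theorem that $S(\frt)$ is free of rank $|W|$ over $S(\frt)^W$ is exactly the right chain of ideas.

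The genuine gap is your sentence claiming linear independence is ``immediate from the freeness of $M$ over $Z(\frg)$ by passing to the quotient.'' It is not. Write a putative relation $\sum_i e_i c_i = 0$ with $c_i = \sum_\gamma z_{i\gamma}\, u_\gamma \in Z(\frg)\otimes U(\fru)$, where $\{u_\gamma\}$ is a PBW basis of $U(\fru)$. Passing to $M$ annihilates \emph{every} term with $u_\gamma$ of positive degree, since $e_i z_{i\gamma} u_\gamma \in U(\frg)\fru$ once $\deg u_\gamma \ge 1$. So the quotient map only yields $z_{i,0}=0$, i.e.\ the $U(\fru)$-degree-zero part of each $c_i$; it says nothing about the remaining coefficients $z_{i\gamma}$. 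To kill those you have to iterate down the filtration $F^k := U(\frg)\,U(\fru)_+^k$, reading the relation in $F^k/F^{k+1} \cong U(\fru^-)U(\frt)\otimes S^k(\fru)$, where again the degree-zero (Harish--Chandra) part of each $z_{i\gamma}$ appears against a $S(\frt)^W$-basis of $S(\frt)$. This works, but it is an induction of precisely the same nature as the spanning step you already (correctly) flag as ``the main obstacle,'' not a one-line consequence of freeness of $M$. A secondary, smaller issue: the injectivity of $Z(\frg)\otimes U(\fru)\to U(\frg)$ via ``dominance of the dual morphism $\frg^*\to\frg^*/\!/G\times\fru^*$'' is itself a nontrivial theorem of Kostant rather than an obvious fact; it can, however, be sidestepped by the same $U(\fru)_+$-filtration argument just described.
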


\begin{proof}
 This is \cite[Lemma 5.7]{MS2}.
\end{proof}

For a finite-dimensional $Z(\frg)$-representation $Z$, we consider as in loc.cit. the module $I_\eta(Z)=U(\frg)\otimes_{Z(\frg)U(\fru)} (Z\otimes  \bbC_\eta).$  Denote by $\Rep^{fd}(Z(\frg))$ the category of finite-dimensional $Z(\frg)$-modules.

\begin{thm} Let $\eta$ be non-degenerate.

 i) There is an equivalence of categories $I_\eta:\Rep^{fd}(Z(\frg))  \to \cN_\eta.$
 
 ii)  Any $M\in \cN_{\Theta,\eta}$ is isomorphic to a finite sum of objects isomorphic to $N_{\Theta,\eta}$.  
\end{thm}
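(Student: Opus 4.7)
The plan is to produce a quasi-inverse to $I_\eta$ via the Whittaker-vector functor
\[
W_\eta:\cN_\eta\longrightarrow\Rep^{fd}(Z(\frg)),\quad W_\eta(M):=\{\,m\in M\mid xm=\eta(x)m\text{ for all }x\in\fru\,\},
\]
which is naturally a $Z(\frg)$-module because $Z(\frg)$ commutes with $\fru$; the finite-dimensionality of $W_\eta(M)$ will fall out at the end from the $Z(\frg)$-local finiteness built into the definition of $\cN$. Tensor-hom adjunction yields
\[
\Hom_{U(\frg)}(I_\eta(Z),M)=\Hom_{Z(\frg)U(\fru)}(Z\otimes\bbC_\eta,M)=\Hom_{Z(\frg)}(Z,W_\eta(M)),
\]
so $I_\eta\dashv W_\eta$. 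Theorem \ref{free}, asserting that $U(\frg)$ is free over $Z(\frg)U(\fru)$, makes $I_\eta$ exact, and as a right adjoint $W_\eta$ is left exact.

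I would first verify that the unit $Z\to W_\eta(I_\eta(Z))$ is an isomorphism for every $Z\in\Rep^{fd}(Z(\frg))$. Picking a basis $(b_i)_{i\in I}$ of $U(\frg)$ over $Z(\frg)U(\fru)$ from Theorem \ref{free}, with $1$ among the $b_i$, one obtains $I_\eta(Z)=\bigoplus_i b_i\otimes(Z\otimes\bbC_\eta)$ as $\bbC$-vector spaces. A direct computation modeled on \cite[Thm 3.4]{Kos2} then shows that the non-degeneracy of $\eta$ forces every Whittaker vector to lie in the summand indexed by $1$, so $W_\eta(I_\eta(Z))\cong Z$. In particular $I_\eta$ is fully faithful.

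The counit $\varepsilon_M:I_\eta(W_\eta(M))\to M$ is the heart of the proof. For surjectivity I would produce a nonzero Whittaker vector in any nonzero $M\in\cN_\eta$ by an Engel-type argument: each $x-\eta(x)$ with $x\in\fru$ is locally nilpotent on $M$, $\fru$ itself is nilpotent, and the non-degeneracy of $\eta$ at the simple roots is used to propagate local annihilation from a single vector to all of $\fru$ simultaneously. Combined with the finite length of objects in $\cN$ from \cite[Thm 2.5]{MS2}, iteration on $M/U(\frg)W_\eta(M)\in\cN_\eta$ then shows that $M$ is generated by its Whittaker vectors. For injectivity, let $K=\ker\varepsilon_M\in\cN_\eta$: the triangle identity, left-exactness of $W_\eta$, and the unit isomorphism together force $W_\eta(K)=0$, whence $K=0$ by the surjectivity argument. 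Finite-dimensionality of $W_\eta(M)$ follows by combining $\varepsilon_M$ being an isomorphism with the $Z(\frg)$-local finiteness of $M$.

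Part (ii) is then a direct corollary of (i). By Chevalley's theorem $Z(\frg)$ is a polynomial algebra, so $Z(\frg)/\Theta\cong\bbC$ for any maximal ideal $\Theta$, and the full subcategory of $\Rep^{fd}(Z(\frg))$ annihilated by $\Theta$ is just finite-dimensional complex vector spaces, each of which is a finite direct sum of copies of $\bbC_{\psi_\Theta}$. Under the equivalence $I_\eta$ this subcategory matches $\cN_{\Theta,\eta}$ and $I_\eta(\bbC_{\psi_\Theta})=N_{\Theta,\eta}$, so every $M\in\cN_{\Theta,\eta}$ is isomorphic to a finite direct sum of copies of $N_{\Theta,\eta}$. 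The main obstacle in the entire argument is the simultaneous production of Whittaker vectors in an arbitrary $M\in\cN_\eta$ from the non-degeneracy of $\eta$ on simple roots alone; this is the substantive combinatorial input inherited from Kostant and is what makes the freeness result of Theorem \ref{free} usable in the Whittaker setting.
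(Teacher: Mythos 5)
The paper defers entirely to \cite{MS2} for the proof (Thm.~5.6 there for part (ii), Thm.~5.9 for part (i)), so you are in effect reconstructing Mili\u{c}i\'c--Soergel's argument. Your framework --- the adjunction $I_\eta\dashv W_\eta$ with $W_\eta(M)$ the Whittaker vectors, exactness of $I_\eta$ from Theorem \ref{free}, and the basis computation showing the unit is an isomorphism --- is the right skeleton and in the spirit of the original.

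However, the surjectivity of the counit has a genuine gap: the iteration on $M/U(\frg)W_\eta(M)$ does not go through. A Whittaker vector of the quotient lifts only to some $v\in M$ with $(x-\eta(x))v\in U(\frg)W_\eta(M)$ for all $x\in\fru$, and applying Engel to $U(\fru)v$ may well return a Whittaker vector that already lies inside $U(\frg)W_\eta(M)$, so the iteration can stall. Ruling this out is precisely the content of the theorem: a hypothetical non-split self-extension of $N_{\Theta,\eta}$ would be exactly such a counterexample, and part (ii) (semisimplicity of $\cN_{\Theta,\eta}$) is what forbids it. You cannot silently assume this in proving (i) while simultaneously deriving (ii) as a corollary of (i); in \cite{MS2} the logical order is in fact reversed --- semisimplicity (Thm.~5.6) is established first, via projectivity/injectivity of $N_{\Theta,\eta}$ in $\cN_{\Theta,\eta}$, and the equivalence (Thm.~5.9) is deduced from it. A secondary slip: your Engel argument for producing a nonzero Whittaker vector in any nonzero $M\in\cN_\eta$ does not use non-degeneracy of $\eta$ at all --- twisting by $-\eta$ makes the nilpotent $\fru$ act by nilpotent operators on the finite-dimensional $U(\fru)$-module $U(\fru)m$, and Engel's theorem gives a common null vector directly. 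Non-degeneracy is what powers Kostant's simplicity and Whittaker-vector uniqueness for $N_{\Theta,\eta}$, hence your unit computation, not the existence step where you invoke it.
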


\begin{proof}
 Part i) is \cite[Thm. 5.9]{MS2} whereas part ii) is  \cite[Thm. 5.6]{MS2}.
\end{proof}

For later applications it is also useful to consider the following lift of the categories $\N_\eta.$ 

We let $\cM$ be the category of all finitely generated $U(\frg)$-modules $M$ which are $U(\fru)$-finite. Hence in contrast to $\N$ we do not assume the $Z(\frg)$-finite condition.  It follows by the same reasoning \cite{H1} for the BGG-category $\cO$ that $\cM$ is an abelian, noetherian category closed under submodules, quotients and finite direct sums.

We let $\cM_\eta$ be the subcategory of $\cM$ consisting of objects $M$  such that  $M=\{m\in M\mid \mbox{there exists an integer $k\geq 1$ with } (x-\eta(x))^km=0 \mbox{ for all } x\in \fru\}.$ 
Then $\N_{\overline{\Theta},\eta}$ is a full subcategory of $\M_\eta$ for all maximal ideals  $\Theta$ of  $Z(\frg).$ By the same argument as for $\N$ there is a decomposition
$$\cM=\bigoplus_{\eta \in \fru^\ast} \cM_\eta.$$
Indeed, this follows from \cite[Ch. VII, \S 1, no. 3]{Bour}.

\begin{exam}
 For a character $\eta$ of $\fru$, let
$$M_{\eta}=U(\frg) \otimes_{U(\fru)} K_{\eta}.$$ Then $M
_\eta\in \cM_\eta$ and there is a canonical epimorphism
$M_{\eta} \to N_{\Theta,\eta}.$
 \end{exam}

\vspace{1cm}
\section{The functor $\cF^G_{\hat{\eta}}$}

Now we come back to our $p$-adic situation. We replace
$\mathbb{C}$ by our coefficient field $K$ and consider all objects defined in the previous section over $K$. This is harmless as all definitions are algebraic.

Let $\eta: \fru \to K$ be a character. This extends to a homomorphism $\eta:U(\fru) \to K.$ We suppose that $\eta$ is non-degenerate (i.e, $\eta(x_\alpha)\neq 0$ for $\alpha \in \Delta$) and that it is induced (via derivation) by a locally $L$-analytic  character $\hat{\eta}:U_0 \to K^\ast$  of the unipotent subgroup $U_0=U\cap \bbG(O_L).$



\begin{prop}
 For any $M\in \cM_\eta$ there is a locally $L$-analytic action of $U_0$ on $M$.
\end{prop}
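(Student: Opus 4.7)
The plan is to reduce to the elementary statement that a nilpotent Lie algebra representation on a finite-dimensional space exponentiates to an algebraic group representation, exploiting that every $M\in\cM_\eta$ is locally finite as a $U(\fru)$-module. I would fix $m\in M$, set $V:=U(\fru)\cdot m$, and observe that $V$ is finite-dimensional by the $U(\fru)$-finiteness built into the definition of $\cM$. Combining finite-dimensionality with the defining condition of $\cM_\eta$ yields a single integer $k\geq 1$ such that $(x-\eta(x))^k$ annihilates all of $V$ for every $x\in\fru$.

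Next I would note that $n(x):=(x-\eta(x))|_V$ defines a Lie algebra homomorphism $n\colon\fru\to\End(V)$ with nilpotent values: the bracket identity holds because $\eta$, being a character of $\fru$, vanishes on $[\fru,\fru]$, and nilpotency is precisely the uniform vanishing just established. Since $\bU$ is a unipotent algebraic group with $\Lie(\bU)=\fru$, the representation $n$ integrates to an algebraic group homomorphism $\bU(L)\to\GL(V)$ given by $u\mapsto\exp(n(\log u))$, where $\log\colon\bU\to\fru$ is the inverse of the algebraic exponential isomorphism; the formula is polynomial in $u$ because $n(\log u)$ is nilpotent, so $\exp$ is a finite sum.

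I would then twist by $\hat{\eta}$ and define, for $u\in U_0$ and $v\in V$,
\[
u\cdot v \;:=\; \hat{\eta}(u)\,\exp(n(\log u))\,v.
\]
This is a group homomorphism $U_0\to\GL(V)$ since $\hat{\eta}$ is a character valued in scalars, and it is locally $L$-analytic because $\hat{\eta}$ is locally analytic and $u\mapsto\exp(n(\log u))$ is polynomial. Differentiation at the identity recovers the original $\fru$-action on $V$, using that $d\hat{\eta}=\eta$.

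Finally I would verify compatibility: if $V=U(\fru)\cdot m \subseteq V'=U(\fru)\cdot m'$ are two finite-dimensional $U(\fru)$-submodules, the corresponding actions agree on $V$, since they arise from restrictions of the same $\fru$-representation and the integrated group action is unique. Taking the colimit over the directed system of finite-dimensional $U(\fru)$-submodules of $M$ yields a well-defined $U_0$-action on $M$; with respect to the finest locally convex topology on $M$, each orbit map $u\mapsto u\cdot m$ factors through a finite-dimensional $V$ and is locally analytic there, establishing local $L$-analyticity globally. The main technical point I anticipate is confirming that $u\mapsto\exp(n(\log u))$ is genuinely a group homomorphism; this reduces via the Baker--Campbell--Hausdorff formula applied inside the nilpotent subalgebra $n(\fru)\subseteq\End(V)$ to the fact that $\log$ intertwines the group law on $\bU$ with the BCH product on $\fru$, which is well-defined because $\fru$ is nilpotent.
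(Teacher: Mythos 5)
Your proof is correct and takes essentially the same route as the paper: the paper twists $M$ by $K_{-\eta}$ so that $\fru$ acts locally nilpotently, integrates via the exponential series to an algebraic action of $\bU$, and untwists by $K_{\hat{\eta}}$, and your formula $u\cdot v=\hat{\eta}(u)\exp(n(\log u))v$ is precisely this construction, carried out on the finite-dimensional $U(\fru)$-submodules and glued by a colimit. The additional care with the Baker--Campbell--Hausdorff verification and the compatibility of the pieces fills in details the paper leaves implicit but is not a different method.
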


\begin{proof}
 We consider the twisted module $M\otimes K_{-\eta}$ which is a $U(\frn)$-finite module where $\frn$ acts trivially. Here the action of the Lie algebra $\fru$ integrates uniquely an algebraic action of $\bU$ via the exponential series, i.e.,   we define $u\cdot m := \sum_{n \ge 0} \frac{\frx^n}{n!}m$ where $u = \exp(\frx)\in {\bf U}(\overline{K})$, 
By untwisting, i.e. by considering the tensor product $M\otimes K_{-\eta}\otimes K_{\hat{\eta}}= M$  we get the desired result.
\end{proof}

 Let $\Rep^{\infty,adm}(U_0)$ be the category of smooth admissible  $U_0$-representation and fix an object $V$ in here. Note that $V$ is finite-dimensional since $U_0$ is compact. Denote by $D(\frg,U_0)\subset D(G_0)$ the subalgebra generated by $\frg$ and $D(U_0).$ 
 Then we get for any $M\in \cM_\eta$ by the proposition above a locally analytic $U_0$-action on $M\otimes_K V'$  and thus via the trivial action $\frg$ on $V$ a $D(\frg,U_0)$-module structure on $M\otimes_K V'.$ 
We set $$X_{\hat{\eta}}(M,V)=D(G_0) \otimes_{D(\frg,U_0)}  M \otimes_K´ V'$$ 
which is a separately continuous $D(G_0)$-module on a nuclear Fr\'echet space. Hence its topological dual $X_{\hat{\eta}}(M,V)'$ is a locally analytic $G_0$-representation of compact type. We
put
$$\cF^G_{\hat{\eta}}(M,V):={\rm c-\Ind}^G_{G_0} X_{\hat{\eta}}(M,V)'$$
which is thus a locally analytic $G$-representation.
This construction is functorial in each entry. Hence for a fixed locally analytic character $\hat{\eta}$  we get thus a bifunctor 
$$\cF^G_{\hat{\eta}}: \cM_\eta \times \Rep_K^{\infty,adm} (U_0)\to \Rep^{la}_K (G).$$

\vspace{0.5cm}
\begin{prop}\label{biexact}
 The functor $\cF^G_{\hat{\eta}}$ is bi-exact.
\end{prop}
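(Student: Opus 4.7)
The plan is to reduce bi-exactness to two independent assertions -- the exactness of tensoring with the finite-dimensional $V'$, and the exactness of $D(G_0) \otimes_{D(\frg,U_0)} -$ on the relevant modules -- and then to transport these through strong duality and compact induction, both of which preserve exactness by standard arguments.

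First I would dispose of the ``easy'' reductions. A short exact sequence $0 \to V_1 \to V_2 \to V_3 \to 0$ in $\Rep^{\infty,adm}_K(U_0)$ consists of finite-dimensional $K$-vector spaces (since $U_0$ is compact), so dualising yields an exact sequence of $D(\frg,U_0)$-modules $0 \to V_3' \to V_2' \to V_1' \to 0$, and tensoring over $K$ with a fixed $M \in \cM_\eta$ preserves exactness. Likewise, for a short exact sequence $0 \to M_1 \to M_2 \to M_3 \to 0$ in $\cM_\eta$, tensoring with the finite-dimensional $V'$ remains exact. Hence bi-exactness of $\cF^G_{\hat\eta}$ reduces to showing that $D(G_0) \otimes_{D(\frg,U_0)} -$ carries short exact sequences of $D(\frg,U_0)$-modules of the form $M \otimes V'$ to short exact sequences of $D(G_0)$-modules.

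The central technical point is therefore the flatness of $D(G_0)$ as a right $D(\frg,U_0)$-module on such modules. I would argue via the Fréchet-Stein structure: writing $D(G_0) = \invlim_r D_r(G_0)$ and letting $D_r(\frg,U_0)$ denote the closure of $D(\frg,U_0)$ in $D_r(G_0)$, it suffices to prove that each $D_r(G_0)$ is free (hence flat) as a right $D_r(\frg,U_0)$-module. Using a PBW-type decomposition coming from an Iwahori-style factorisation on a suitable level subgroup and the exponential map, one obtains coordinates in which $D_r(G_0)$ splits as a product of the pieces coming from $\fru^-$, $\frt$ and $D_r(U_0)$; passing to associated graded rings then reduces the freeness to an analytic upgrade of the classical PBW theorem, whose algebraic skeleton is provided by Theorem~\ref{free}. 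Since each $M \otimes V'$ is finitely generated over $U(\frg)$ and hence gives rise to a coadmissible $D(G_0)$-module after base change, levelwise flatness transfers to exactness of $D(G_0) \otimes_{D(\frg,U_0)} -$ in the Fréchet limit.

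Finally I would transport the result back to representations: strong duality restricts to an anti-equivalence between strict short exact sequences of nuclear Fréchet $D(G_0)$-modules and strict short exact sequences in $\Rep^{la,c}_K(G_0)$ (the equivalence recalled in Section 2), yielding a short exact sequence of the $X_{\hat{\eta}}(M,V_i)'$; and $c\text{-}\Ind^G_{G_0}$ is a locally convex direct sum $\bigoplus_{g \in G/G_0} g \cdot (-)$, which is plainly exact. The main obstacle is the flatness step: the algebraic input of Theorem~\ref{free} must be promoted to a topological statement at each Fréchet-Stein level $D_r(G_0)$, which requires a careful choice of coordinates on $G_0$ compatible with $U_0$ and an analysis of associated graded rings. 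The strategy parallels the treatment of the subalgebras $D(\frg,B_0)$ in \cite{OS2} and of the subalgebras considered in \cite{O2}, and should carry over verbatim once the PBW decomposition of $D_r(G_0)$ relative to $U_0$ is set up.
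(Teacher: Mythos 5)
Your overall reduction is sound and parallels the paper's: $c\text{-}\Ind^G_{G_0}$ is exact, duality converts the question into exactness of $X_{\hat\eta}(M,V)$, and the finite-dimensionality of $V'$ is the right observation for the smooth variable. The gap is in the central step, where you propose to establish levelwise flatness of $D_r(G_0)$ over $D_r(\frg,U_0)$ by a PBW argument whose ``algebraic skeleton'' is Theorem~\ref{free}. This does not fit: Theorem~\ref{free} asserts freeness of $U(\frg)$ over $Z(\frg)\,U(\fru)$, and the centre $Z(\frg)$ plays no role in the algebra $D(\frg,U_0)$ (which is generated by $\frg$ and $D(U_0)$, not by $Z(\frg)$ and $\fru$). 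The relevant algebraic decomposition would rather be the triangular PBW splitting of $U(\frg)$ relative to $\fru$, and the analytic upgrade to $D_r(G_0)$ over $D_r(\frg,U_0)$ is precisely what is nontrivial; you cannot extract it from Theorem~\ref{free}. The paper sidesteps all of this by citing \cite[Corollary 7.8.7]{AS} of Agrawal and Strauch for the exactness in $M$, which is the correct black box for subalgebras of the form $D(\frg,H_0)$.

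A secondary, stylistic difference: the paper handles exactness in $V$ without invoking the distribution-module flatness at all. Choosing a locally analytic section $s$ of $G_0 \to G_0/U_0$ and setting $\cH = s(G_0/U_0)$, one gets a vector-space identification $X_{\hat\eta}(M,V) \cong D(\cH) \otimes M \otimes_K V'$ in which $V'$ appears only as a $K$-tensor factor, so exactness in $V$ is immediate. Your route (dualise $V$, tensor with $M$, then push everything through $D(G_0)\otimes_{D(\frg,U_0)}-$) is also fine, but it makes the smooth entry depend on the hard flatness fact unnecessarily.
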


\begin{proof}
 Since ${\rm c-\Ind}^G_{G_0}$ is an exact functor  it is enough to see that the expression $X_{\hat{\eta}}(M,V)$ defines an exact functor in $M$ and $V.$  As for the smooth entry, we choose a locally analytic section $s$ of the projection $G_0 \to G_0/U_0$ and set $\cH=s(G_0/U_0).$  Then $X(M,V)= D(\cH)\otimes_{U(\frg)} M \otimes_K V'$ as vector spaces. 
 As for the exactness in $M$ this follows from the result \cite[Corollary 7.8.7]{AS} by Agrawal and Strauch.
\end{proof}

We can give as in \cite{OS2} another description of the above representation. We consider for an object $M$ of 
$\M_\eta$, a finite-dimensional $U(\fru)$-subspace $W$  which generates $M$ as  a $U(\frg)$-module and which is equipped with a locally analytic $U_0$-action. Let 
$\frd=\ker (U(\frg)\otimes_{U(\fru)} W  \to M)$ be the kernel of the natural morphism. Let $V$ be a smooth admissible $U_0$-representation. Then there is similarly as in \cite{OS2} the non-degenerate pairing
\begin{equation*}
\begin{array}{rccc}
\langle \cdot , \cdot \rangle: & \left(D(G_0) \otimes_{D(U_0)}  W\otimes V' \right) \otimes_K \Ind^{G_0}_{U_0}(W' \otimes V) & \lra & C^{an}(G_0,V) .\\
&&&\\
& (\delta \otimes w) \otimes f & \mapsto & \Big[ g \mapsto \big(\delta \cdot_r (f(\cdot)(w))\big)(g)\Big]
\end{array}
\end{equation*}
\noindent Here, by definition, we have $\big(\delta \cdot_r (f(\cdot)(w))\big)(g) = \delta(x \mapsto f(gx)(w))$.

Then we may identify similarly to \cite{OS2} our representation $\cF^G_{\hat{\eta}}(M,V)$ with the closed subrepresentation 
$$c-\Ind^G_{U_0}(W'\otimes V)^\frd:=\{f\in c-\Ind^G_{U_0}(W'\otimes V) \mid \langle x,f \rangle=0 \, \forall x\in \frd  \}$$ of $c-\Ind^G_{U_0}(W' \otimes V).$ 

\begin{exam}
Let $M=N_{\Theta,\eta}$ be as above. Then
$$\cF^G_{\hat{\eta}}(N_{\Theta,\eta})=c-\Ind^G_{U_0}(K_{\hat{\eta}}')^{Z(\frg)=\Theta}$$
is the closed subspace of $c-\Ind^G_{U_0}(K_{\hat{\eta}}')$ where $Z(\frg)$ acts by $\psi_\Theta'.$
\end{exam}

If $V={\bf 1}^\infty$ is the trivial representation of $U_0$,  then we simply omit it from the input in our functor as in \cite{OS2}. E.g., we write
$\cF^G_{\hat{\eta}}(M)$ for  $\cF^G_{\hat{\eta}}(M,{\bf 1}^\infty).$

\begin{prop}
 Let $M\in \M_\eta$ and let $V\in \Rep^\infty_K(U_0)$. Suppose that $V$ is admissible.  Then the representation $X_{\hat{\eta}}(M,V)'$ is strongly admissible and $\cF^G_{\hat{\eta}}(M,V)$ is ind-admissible.
\end{prop}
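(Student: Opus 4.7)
The plan is to split the proposition into its two assertions and handle them in turn. For the first, strong admissibility of $X_{\hat{\eta}}(M,V)'$, I would reduce to showing that $X_{\hat{\eta}}(M,V)$ is finitely generated as a $D(G_0)$-module: granted this, the standard characterisation of strong admissibility from Schneider-Teitelbaum applies, since the Fr\'echet-Stein structure on $D(G_0)$ ensures that every finitely generated $D(G_0)$-module is automatically coadmissible when equipped with its canonical Fr\'echet topology. The second assertion, ind-admissibility of $\cF^G_{\hat\eta}(M,V)$, will then follow by invoking Lemma A.20 of \cite{O2} applied to the compact induction, since strong admissibility implies ordinary admissibility.

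For the first step I would use three inputs. By the very definition of $\cM_\eta$ the module $M$ is finitely generated over $U(\frg)$; because $U_0$ is compact and $V$ is smooth admissible, $V'$ is finite-dimensional over $K$. If $m_1, \ldots, m_r$ generate $M$ over $U(\frg)$ and $v_1, \ldots, v_s$ is a basis of $V'$, then the elements $m_i \otimes v_j$ visibly generate $M \otimes_K V'$ as a $U(\frg)$-module, hence \emph{a fortiori} as a module over the larger subalgebra $D(\frg,U_0) \subset D(G_0)$. Extending scalars along $D(\frg,U_0) \hookrightarrow D(G_0)$ preserves finite generation, so that $X_{\hat\eta}(M,V) = D(G_0) \otimes_{D(\frg,U_0)} (M \otimes_K V')$ is a finitely generated $D(G_0)$-module, and its strong dual $X_{\hat\eta}(M,V)'$ is consequently strongly admissible.

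For the second step, I would simply observe that strong admissibility implies admissibility, and that since $G$ is second countable the quotient $G/G_0$ is countable. Lemma A.20 of \cite{O2}, already recalled in Section~2, asserts that the compact induction $c\text{-}\Ind^G_{G_0}$ of an admissible locally analytic $G_0$-representation is ind-admissible. Applying this to the representation $X_{\hat\eta}(M,V)'$ delivers the remaining claim.

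I do not anticipate a serious obstacle. The deeper technical content (the Fr\'echet-Stein formalism for $D(G_0)$, and the construction of an ind-admissible filtration on a compact induction) is entirely packaged into the cited results of Schneider-Teitelbaum and into Lemma A.20 of \cite{O2}. The single point requiring direct verification is the persistence of finite generation across the tensor product $D(G_0) \otimes_{D(\frg,U_0)} (M \otimes_K V')$, which is essentially immediate once one has noted that $V'$ is finite-dimensional and $M$ is finitely generated over $U(\frg) \subset D(\frg,U_0)$.
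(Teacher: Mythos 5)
Your proof matches the paper's in both parts. The argument for strong admissibility — $M$ finitely generated over $U(\frg)$, $V'$ finite-dimensional, hence $M\otimes_K V'$ finitely generated over $D(\frg,U_0)$, hence $X_{\hat\eta}(M,V)$ finitely generated over $D(G_0)$, and strong admissibility by definition — is exactly what the paper does. For ind-admissibility you invoke Lemma A.20 of \cite{O2} directly, whereas the paper spells out that lemma's proof (the double-coset decomposition $c\text{-}\Ind^G_{G_0}(X')|_{G_0} = \bigoplus_{s\in G_0\backslash G/G_0} s\cdot\bigl(\bigl(\Ind^{G_0}_{G_0\cap sG_0s^{-1}}{\bf 1}\bigr)\otimes X'\bigr)$, a countable locally convex sum of admissible $G_0$-representations, hence a strict inductive limit of such), but the substance is the same.
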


\begin{proof}
 The Lie algebra representation $M$ is a  finitely generated $U(\frg)$-module. As $V$ is finite-dimensional the module $X_{\hat{\eta}}(M,V)$ is finitely generated as a $D(G_0)$-module, as well. Thus by definition $X_{\hat{\eta}}(M,V)'$ is strongly admissible .  

 The proof of the ind-admissibility is the same as in 
 \cite{O2} which we recall for convenience.
 Let $S\subset G$ be a set of representatives for the double cosets $G_0 \backslash G/G_0.$ 
 For each $s\in S$, let $R_s \subset G_0$ be a set of representatives for the finite set $G_0sG_0/G_0.$  As for representations of finite groups \cite{Se} we have  
 \begin{eqnarray*}
  c-\Ind^G_{G_0}(X_{\hat{\eta}}(M,V)')_{|G_0} & = & \bigoplus_{s \in S} s  \cdot \big(\bigoplus_{t\in R_s} t\cdot X_{\hat{\eta}}(M,V)' \big) \\ & = &  \bigoplus_{s \in S} s  \cdot \big(\Ind^{G_0}_{G_0 \cap  sG_0s^{-1}} {X_{\hat{\eta}}(M,V)'} \big)\\ & = & \bigoplus_{s \in S} s  \cdot \big((\Ind^{G_0}_{G_0 \cap  sG_0s^{-1}} {\bf 1}) \otimes X_{\hat{\eta}}(M,V)' \big).
  \end{eqnarray*}
  Now $G$ is second countable hence $G_0\backslash G/G_0$ is countable. Since any countable locally convex sum is a strict inductive limit and $\Ind^{G_0}_{G_0 \cap  sG_0s^{-1}} {\bf 1}$ is finite-dimensional the claim follows.  
\end{proof}

To the end of this section we want to specialize our functor to the subcategory $\cN_\eta.$

Let $Z$ be a finite-dimensional $Z(\frg)$-representation.
Then $I_\eta(Z)=U(\frg)\otimes_{Z(\frg)U(\fru)} Z\otimes K_\eta \in \cN_\eta$ and we have a locally analytic $U_0$-action on $I_\eta(Z)$. Thus we get via the trivial action $\frg$ on $V$ a $D(\frg,U_0)$-module structure on $I_\eta(Z)\otimes_K V'.$ 
 We set $$\tilde{\cF}^G_{\hat{\eta}}(Z,V):=\cF^G_{\hat{\eta}}(I_\eta(Z),V).$$
This construction is functorial in each entry. Hence for a fixed $\hat{\eta}$ we get thus a bifunctor 
$$\tilde{\cF}^G_{\hat{\eta}}: \Rep_K^{fd}(Z(\frg)) \times \Rep_K^{\infty,adm} (U_0)\to Rep^{la}_K (G).$$

\vspace{0.5cm}
\begin{prop}
 The functor $\tilde{\cF}^G_{\hat{\eta}}$ is bi-exact.
\end{prop}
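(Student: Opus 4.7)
The plan is to reduce the bi-exactness of $\tilde{\cF}^G_{\hat{\eta}}$ to the bi-exactness of $\cF^G_{\hat{\eta}}$ established in Proposition \ref{biexact}, and then to deal separately with the extra step $Z \mapsto I_\eta(Z)$.

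First I would note that exactness in the second variable is essentially automatic. For a fixed finite-dimensional $Z(\frg)$-module $Z$, the object $I_\eta(Z) \in \cM_\eta$ is fixed, so any short exact sequence $0 \to V_1 \to V_2 \to V_3 \to 0$ in $\Rep_K^{\infty,adm}(U_0)$ is taken by $\tilde{\cF}^G_{\hat{\eta}}(Z,-) = \cF^G_{\hat{\eta}}(I_\eta(Z),-)$ to a short exact sequence by Proposition \ref{biexact}.

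For exactness in the first variable it suffices, again by Proposition \ref{biexact}, to show that the assignment $Z \mapsto I_\eta(Z) = U(\frg) \otimes_{Z(\frg)U(\fru)} (Z \otimes_K K_\eta)$ is an exact functor from $\Rep_K^{fd}(Z(\frg))$ to $\cM_\eta$. The twist $(-)\otimes_K K_\eta$ is obviously exact since $K_\eta$ is one-dimensional over $K$ and the action of $Z(\frg)U(\fru)$ on $Z \otimes_K K_\eta$ is defined by letting $Z(\frg)$ act on the left factor and $\fru$ act through $\eta$ on the right factor. The key input is then that $U(\frg) \otimes_{Z(\frg)U(\fru)} -$ is an exact functor from $Z(\frg)U(\fru)$-modules to $U(\frg)$-modules. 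But this is immediate from Theorem \ref{free}: $U(\frg)$ is free as a right module over $Z(\frg)U(\fru) \cong Z(\frg)\otimes_K U(\fru)$, so in particular flat.

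Combining these two observations, a short exact sequence $0 \to Z_1 \to Z_2 \to Z_3 \to 0$ in $\Rep_K^{fd}(Z(\frg))$ yields a short exact sequence $0 \to I_\eta(Z_1) \to I_\eta(Z_2) \to I_\eta(Z_3) \to 0$ in $\cM_\eta$, which by Proposition \ref{biexact} is taken to a short exact sequence by $\cF^G_{\hat{\eta}}(-,V)$ for any $V \in \Rep_K^{\infty,adm}(U_0)$. There is no real obstacle here; the whole point is that the freeness statement in Theorem \ref{free} is precisely what makes $I_\eta$ exact, and the rest of the work has already been done in Proposition \ref{biexact}.
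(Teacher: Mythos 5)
Your proof is correct and follows the same overall reduction as the paper: exactness in $V$ is inherited directly from Proposition \ref{biexact}, and exactness in $Z$ comes down to the exactness of $I_\eta$. The paper's proof is a one-liner that simply cites ``the exactness of the functor $I_\eta$,'' which the reader would most naturally trace back to the equivalence of categories $I_\eta:\Rep^{fd}(Z(\frg)) \to \cN_\eta$ of Mili\u{c}i\'c--Soergel (part (i) of the theorem preceding it). You instead supply a more elementary and self-contained justification: since $U(\frg)$ is free, hence flat, over $Z(\frg)U(\fru)$ by Theorem \ref{free}, the functor $U(\frg)\otimes_{Z(\frg)U(\fru)}(-)$ is exact, and the one-dimensional twist by $K_\eta$ is trivially exact. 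Both routes are valid under the paper's standing hypothesis that $\eta$ is non-degenerate; your argument has the small advantage of isolating precisely the ingredient (freeness, not the full equivalence) that is actually doing the work, and it would go through even in contexts where the categorical equivalence is unavailable.
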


\begin{proof}
 This follows from Proposition \ref{biexact} and the exactness of the functor $I_\eta.$
\end{proof}

´

\vspace{1cm}
\section{Irreducibility}


The first part of this chapter concerns the following irreducibility result. Here we fix as in the previous section a maximal ideal  $\Theta$ of $Z(\frg)$ and let $\eta$ be a non-degenerate  character with locally analytic lift $\hat{\eta}.$

\begin{thm}  The locally analytic $G$-representation
$\cF^G_{\hat{\eta}}(N_{\Theta,\eta})$ is topologically irreducible. 
\end{thm}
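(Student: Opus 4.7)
The plan follows the two-step strategy sketched in the introduction, modelled on \cite{OS2,O2}. First I would show that $X_{\hat{\eta}}(N_{\Theta,\eta}) = D(G_0) \otimes_{D(\frg,U_0)} N_{\Theta,\eta}$ is a topologically simple $D(G_0)$-module; equivalently, its dual $X_{\hat{\eta}}(N_{\Theta,\eta})'$ is topologically irreducible as a locally analytic $G_0$-representation. Second, I would apply the levelwise Mackey criterion of Bode from \cite[Appendix]{O2} to the compact induction $\cF^G_{\hat{\eta}}(N_{\Theta,\eta}) = c\text{-}\Ind^G_{G_0} X_{\hat{\eta}}(N_{\Theta,\eta})'$ to upgrade topological irreducibility from $G_0$ to $G$.

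For the first step I would fix a locally analytic section $s\colon G_0/U_0 \to G_0$ with image $\cH$, yielding the identification $X_{\hat{\eta}}(N_{\Theta,\eta}) \cong D(\cH) \otimes_{U(\frg)} N_{\Theta,\eta}$ as locally convex $K$-spaces, already used in the proof of Proposition \ref{biexact}. Since $N_{\Theta,\eta}$ is finitely generated over $U(\frg)$, the module $X_{\hat{\eta}}(N_{\Theta,\eta})$ is coadmissible, so its closed submodules are controlled level-by-level in the Fr\'echet-Stein presentation $D(G_0) = \invlim_n D(G_0)_n$. The two algebraic inputs are Kostant's simplicity theorem for $N_{\Theta,\eta}$ as a $U(\frg)$-module and the Mili\u{c}i\'c-Soergel freeness result (Theorem \ref{free}) that $U(\frg)$ is free over $Z(\frg) U(\fru)$. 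At each level, I would argue that any nonzero closed submodule of $D(G_0)_n \otimes_{D(\frg,U_0)} N_{\Theta,\eta}$ cuts out a nonzero $U(\frg)$-stable subspace of $N_{\Theta,\eta}$, the freeness of Theorem \ref{free} being used to disentangle the $U(\frg)$-direction from the $D(\cH)$-direction. By Kostant this subspace must be all of $N_{\Theta,\eta}$, so the submodule exhausts the whole level, and passage to the inverse limit yields the desired topological simplicity.

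For the second step I would verify the disjointness hypothesis of Bode's Mackey criterion, namely that for each nontrivial double coset representative $g \in G_0 \backslash G / G_0$ no nonzero continuous $D(G_0 \cap g G_0 g^{-1})$-linear map exists between $X_{\hat{\eta}}(N_{\Theta,\eta})$ and its $g$-twist. The distinguishing invariant is the Whittaker datum $(\Theta,\hat{\eta})$: conjugation by $g \notin G_0$ transports $\fru$ to $\Ad(g^{-1})(\fru)$, and the non-degeneracy of $\eta$ on every simple root space together with the Bruhat decomposition of $G$ prevents compatibility of the two data, forcing any intertwiner to vanish. The main obstacle is the topological simplicity at the $G_0$-level in Step 1: the algebraic simplicity of $N_{\Theta,\eta}$ over $U(\frg)$ does not transfer formally to the completed distribution-algebra module, and precise control of closed submodules at each Fr\'echet-Stein level through Theorem \ref{free} is the subtle technical heart of the argument.
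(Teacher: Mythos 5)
Your overall two-step architecture — topological simplicity of $X_{\hat{\eta}}(N)$ as a $D(G_0)$-module, followed by Bode's levelwise Mackey criterion for the compact induction — matches the paper. The citations to Kostant's simplicity theorem and Mili\u{c}i\'c--Soergel freeness (Theorem \ref{free}) as algebraic inputs are also the right ones. The gap is in the interior of Step 1.

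Your proposed mechanism — that any nonzero closed submodule of $D(G_0)_n \otimes_{D(\frg,U_0)} N$ ``cuts out a nonzero $U(\frg)$-stable subspace of $N$'', with Theorem \ref{free} disentangling the $D(\cH)$- and $U(\frg)$-directions — is not a valid step, and the paper does not argue this way. The point is that after restricting to an open normal uniform subgroup $H \sub G_0$ and completing to level $r$, one has a \emph{direct sum} decomposition
$\bN_r = \bigoplus_{g \in HU_0/U_{0,r}} \delta_g\, \frn_r$
into conjugate copies of $\frn_r := U_r(\frg) N$. Simplicity of $\bN_r$ as a $D_r(H)$-module does not reduce to finding a $U(\frg)$-stable subspace of $N$; it requires two separate facts: (a) each summand $\frn_r$ is a simple $U_r(\frg)$-module — this is \emph{not} a formal consequence of Kostant's algebraic simplicity, but follows from the Harish--Chandra isomorphism together with Theorem \ref{free}, which exhibit $N$ as essentially a finite sum of copies of $U(\fru^-)$, so that one can invoke \cite[1.3.12]{F}/\cite[3.4.8]{OS1}; and (b) the twists $\delta_{g_1} \star \frn_r$ and $\delta_{g_2} \star \frn_r$ are pairwise non-isomorphic for $g_1 U_{0,r} \neq g_2 U_{0,r}$. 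Fact (b) is the technical heart of the proof and is entirely absent from your sketch. It is established by reducing via the Iwahori and Bruhat decompositions to representatives in the torus or the opposite unipotent, passing to a formal completion $\hat N$ of $N$ where the Whittaker vector is unique up to scalar, and showing that the resulting series $\delta_{u^{-1}} \cdot v^+$ fails to converge in $\frn_r$ by an explicit norm estimate on $\| y^{(m)}_\beta \|_s^{(m)} = s^\kappa$ against the radius of convergence of the exponential. The freeness of Theorem \ref{free} enters here to guarantee that $U(\fru^-)$ acts freely on $N$ (ruling out spurious eigenvectors), not to ``disentangle directions''.

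A secondary point: working with a generic Fr\'echet--Stein presentation $D(G_0) = \invlim_n D(G_0)_n$ is not sharp enough. The paper deliberately passes to a uniform pro-$p$ subgroup $H_0$ and uses the norms $\| \cdot \|_r$ with $r$ in the range (\ref{r_and_s}), because that is exactly the regime where $U(\frg)$ is dense in $D_s(H^m)$ and the closure $U_r(\frg) = D_s(H^m)$ can be identified concretely. Your Step 2 (disjointness of $g$-twists for $g \notin G_0$) is closer to what the paper does, but it again relies on the same formal-completion/Whittaker-vector/Cartan-decomposition analysis that you have not supplied in Step 1; without that machinery, ``the non-degeneracy of $\eta$ prevents compatibility'' is an intuition, not a proof.
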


\begin{proof}
We abbreviate $N_{\Theta,\eta}$ by $N.$  The proof is at the beginning verbatim the same as in  \cite{OS2,O2}  by replacing $P_0$ by $U_0$ and with the obvious (notational) changes.
We first prove that $X=X_{\hat{\eta}}(N)$ is a simple $D(G_0)$-module and consider for an arbitrary open normal subgroup $H$  of $G_0$ the decomposition $$X=\bigoplus_{g\in G_0/H} \delta_g \star \left(D(HU_0) \otimes_{D(\frg,U_0)} N \right).$$ Here we recall  that for a $D(H)$-module $N$ and $g \in G_0,$ we denote by $\delta_g \star N$ the space $N$, equipped with the structure of a $D(H)$-module given by $\delta \cdot_g n = (\delta_{g^{-1}} \delta \delta_g)n$, where $n \in N$, $\delta \in D(H)$, and the product $\delta_{g^{-1}} \delta \delta_g \in D(G_0)$ is contained in $D(H)$.

We arrive at the following modified version of Thm. 5.5  loc.cit. in a reduced version which proves that $X_{\hat{\eta}}(N)$ is a  simple $D(G_0)$-module.

\begin{thm}\label{irredH}  Let $H$ be an open normal subgroup of $G_0$, and let $g, g_1, g_2 \in G_0$. Then

(i) The $D(H)$-module $\delta_g \star \left(D(HU_0) \otimes_{D(\frg,U_0)} N\right)$ is simple.

(ii) The $D(H)$-modules $\delta_{g_1} \star \left(D(HU_0) \otimes_{D(\frg,U_0)} N\right)$ and $\delta_{g_2} \star \left(D(HU_0) \otimes_{D(\frg,U_0)} N\right)$ are isomorphic if and only if $g_1 HU_0 = g_2 HU_0$.
\end{thm}

\noindent \Pf  The proof passes through several reduction steps. The first step is to reduce to the case of a suitable subgroup $H_0 \sub H$ which is normal in $G_0$ and uniform pro-$p$.

{\it Step 1: reduction to $H_0$.}  Let $\Lie(\bG_0)$, $\Lie(\bU_0)$ etc.  be the Lie algebras of $G_0$, $U_0$ etc. These are $O_L$-lattices in $\frg = \Lie(G)$, $\fru = \Lie(U)$ etc. respectively. Moreover, $\Lie(\bG_0)$, $\Lie(\bU_0)$ etc.  are $\Zp$-Lie algebras, and we have as usual a decomposition $\Lie(\bG_0)= \Lie(\bU_{0}) \oplus \Lie(\bT_0) \oplus  \Lie(\bU^-_{0}). $

\noindent For $m_0 \ge 1$ ($m_0 \ge 2$ if $p=2$) the $O_L$-lattices $p^{m_0} \Lie(\bG_0)$, $p^{m_0} \Lie(\bU_0)$ etc.  are powerful $\Zp$-Lie algebras, cf. \cite[sec. 9.4]{DDMS}, and hence
$\exp_G: \frg \dashrightarrow G$ converges on these $O_L$-lattices. We set 
\begin{numequation}\label{H_0}
H_0 = \exp_G\Big(p^{m_0} \Lie(\bG_0)\Big) \;, \;\; H_0^0 = \exp_G\Big(p^{m_0} \Lie(\bT_0)\Big), 
\end{numequation}
\begin{equation*} H_0^+ = \exp_G\Big(p^{m_0} \Lie(\bU_{0})\Big), H_0^- = \exp_G\Big(p^{m_0} \Lie(\bU_{0}^-)\Big),  
\end{equation*}
which are uniform pro-$p$ groups. Moreover $H_0$ is normal in $G_0$ and  $H_0^+$ in $U_0$, respectively. 
We choose   $m_0$ large enough such that $H_0$ is contained $H$.
With the same argument as in \cite{OS2} we may assume from now on that $H_0=H.$

{\it Step 2: passage to $D_r(H)$.} We put
$$\kappa = \left\{\begin{array}{lcl} 1 & , & p>2 \\
2 & , & p=2 \end{array} \right.$$
and denote by $r$ a real number in $(0,1) \cap p^\bbQ$ with the property that there is $m \in \bbZ_{\ge 0}$ such that $s = r^{p^m}$ satisfies
\begin{numequation}\label{r_and_s}
 \max\{\frac{1}{p},|\pi|p^{-1/(p-1)} \} < s \mbox{ and } s^{\kappa} < p^{-1/(p-1)} 
\end{numequation}
where $\pi\in O_L$ is an uniformizer.
We let $\|\cdot\|_r$ denote the norm on $D(H)$ associated to the canonical $p$-valuation. Here $D_r(H)$ is the corresponding Banach space completion. This is a noetherian Banach algebra, and $D(H) = \varprojlim_{r <1} D_r(H)$. Set $\bN:=D(HU_0)\otimes_{D(\frg,U_0)}N.$ As explained in loc.cit. it is enough to show that
$$\bN_r:= D_r(H) \otimes_{D(H)} \bN = D_r(HU_0) \otimes_{D(\frg,U_0)} N$$
are simple $D_r(H)$-modules for a sequence of $r$'s converging to $1$. From now on we assume that, in addition to (\ref{r_and_s}), that $r$ is chosen such that $\bN_r \neq 0$, and we consider $N$ as being contained in $\bN_r$.

{\it Step 3: passage to $U_r(\frg)$.} Let $U_r(\frg) = U_r(\frg,H)$ be the topological closure of $U(\frg)$ in $D_r(H)$. Then
\begin{numequation}\label{density1}
\hskip-50pt \mbox{$U(\frg)$ is dense in $D_r(H)$ if $r^{\kappa} < p^{-\frac{1}{p-1}}$ and $U_r(\frg) = D_r(H)$.}
\end{numequation}
 Let $(P_m(H))_{m \ge 1}$ be the lower $p$-series of $H$, cf. \cite[1.15]{DDMS}. Note that $P_1(H) = H$. For $m \ge 0$ put $H^m : = P_{m+1}(H)$ so that $H^0 = H$. Then $H^m$ is a uniform pro-$p$ group with $\Zp$-Lie algebra equal to $p^m\Lie_\Zp(H)$. 
 Thus our subgroups above have the following description.
 Let $\frg_\bbZ$ be a $\bbZ$-form of $\frg$, i.e. $\frg_\bbZ \otimes_\bbZ L = \frg$. We fix a Chevalley basis $(x_\gamma,y_\gamma,h_\alpha \midc \gamma \in \Phi^+, \alpha \in \Delta)$ of $[\frg_\bbZ,\frg_\bbZ]$. We have $x_\gamma \in \frg_\gamma$, $y_\gamma \in \frg_{-\gamma}$, and $h_\alpha = [x_\alpha,y_\alpha] \in \frt$, for $\alpha \in \Delta$. Then
$$\Lie_\Zp(H^-) = p^{m_0}\Lie(\bU^-_{0}) = \bigoplus_{\beta \in \Phi^+} O_L y_\beta^{(0)} \;,$$
where $y_\beta^{(0)} = p^{m_0}y_\beta$. Moreover, the $\Zp$-Lie algebra of $H^{-,m}:=H^m\cap H^-$ is
$$\Lie_\Zp(H^{-,m}) = p^m\Lie(H^-) = \bigoplus_{\beta \in \Phi^+} O_L y_\beta^{(m)} \,,$$
where $y_\beta^{(m)} = p^{m_0+m}y_\beta$.
In the same way
$$\Lie_\Zp(H^{+,m}) = p^m\Lie(H^+) = \bigoplus_{\beta \in \Phi^+} O_L x_\beta^{(m)} \,,$$
where $x_\beta^{(m)} = p^{m_0+m}x_\beta$
and
$$\Lie_\Zp(H^{0,m}) = p^m\Lie(H^0) = \bigoplus_{\beta \in \Phi^+} O_L h_\beta^{(m)} \,,$$
where $h_\beta^{(m)} = p^{m_0+m}h_\beta$.

 Let $s = r^{p^m}$ be as in (\ref{r_and_s}). Denote by $\|\cdot\|_s^{(m)}$ the norm on $D(H^m)$ induced by the canonical $p$-valuation on $H^m$. Then, by \cite[6.2, 6.4]{Sch}, the restriction of $\|\cdot\|_r$ on $D(H)$ to $D(H^m)$ is equivalent to $\|\cdot\|_s^{(m)}$, and $D_r(H)$ is a finite and free $D_s(H^m)$-module on a basis any set of coset representatives for $H/H^m$. By (\ref{density1}) we can conclude:
\begin{numequation}\label{density2}
\mbox{If $s = r^{p^m}$ is as in (\ref{r_and_s}), then $U(\frg)$ is dense in $D_s(H^m)$, hence $U_r(\frg,H) = D_s(H^m)$.}
\end{numequation}
In particular, $U_r(\frg) \cap H = H^m$ is an open subgroup of $H$. Let
$$\frn_r := U_r(\frg)N$$
\noindent be the $U_r(\frg)$-submodule of $\bN_r$ generated by $N$. The module $N$ is dense in $\frn_r$ with respect to this topology. Now $N$ is essentially a  finite direct sum of copies of $U(\fru^-)$ by the Harish-Chandra isomorphism (see also identity (\ref{presentation_N}) below). Hence it follows from \cite[1.3.12]{F}
or \cite[3.4.8]{OS1} that $\frn_r$ is a simple $U_r(\frg)$-module, and in particular a simple $D_r(\frg,U_0)$-module. Thus for every $g \in G_0$ the $U_r(\frg)$-module $\delta_g \star \frn_r$ (which is defined as above by composing the natural action with conjugation with $g$) is simple. As in \cite{OS2} we have for $U_{0,r}:= HU_0 \cap D_r(\frg,U_0)$ 

\medskip
\noindent (i) $U_{0,r} = H^m U_0=U_0H^{0,m} H^{-,m}$.

\noindent (ii) $D_r(HU_0) =  \bigoplus_{g \in HU_0/U_{0,r}} \delta_g D_r(\frg,U_0)$.

\medskip
By (ii) we deduce that  $\frn_r = D_r(\frg,U_0) \otimes_{D(\frg,U_0)} N \sub D_r(HU_0) \otimes_{D(\frg,U_0)} N = \bN_r$ and
\begin{numequation}\label{decomp}
\bN_r = D_r(HU_0) \otimes_{D(\frg,U_0)} N = D_r(HU_0) \otimes_{D_r(\frg,U_0)} \frn_r = \bigoplus_{g \in HU_0/U_{0,r}} \delta_g \frn_r \,.
\end{numequation}
Here the action of $U_r(\frg)$ on  $\delta_g \frn_r$ is the same as on $\delta_g \star \frn_r.$
Thus it suffices to show the theorem below.
Here we only consider $\frn_r$ as a module over $U_r(\frg)$. Thus for proving Theorem \ref{irredH} it suffices to apply the next theorem.
 
\begin{thm}\label{U_r_modules} Assume that all the  above assumptions are satisfied. Then
for any $g, h \in G_0$ with $gU_{0,r} \neq hU_{0,r}$ the $U_r(\frg)$-modules $\delta_{g} \star \frn_r$ and $\delta_{h} \star \frn_r$ are not isomorphic.
\end{thm}

\noindent \Pf We start with the following observation.
By our assumption on $r$ and $s$ we have $U_r(\fru^-) = U_r(\fru^-,H^-) = D_s(H^{-,m})$. Elements in $U_r(\fru^-)$ thus have a description as power series in $(y^{(m)}_\beta)_{\beta \in \Phi^+}$:
$$U_r(\fru^-) = \left\{\sum_{n = (n_\beta)} d_n  (y^{(m)})^n \midc \lim_{|n| \ra \infty} |d_n|s^{\kappa|n|} = 0 \right\} \,,$$
where $(y^{(m)})^n$ is the product of the $(y^{(m)}_\beta)^{n_\beta}$ over all $\beta \in \Phi^+$, taken in some fixed order.
Let $\|\cdot\|_s^{(m)}$ be the norm on $D_s(H^{-,m})$ induced by the canonical $p$-valuation on $H^{-,m}$. Then we have for any generator $y^{(m)}_\beta$
\begin{numequation}\label{norm_generator}
\| y^{(m)}_\beta\|_s^{(m)} = s^\kappa \;.
\end{numequation}

Now let $\phi: \delta_{g} \star \frn_r \stackrel{\simeq}{\lra} \delta_{h} \star \frn_r$ be an isomorphism of $U_r(\frg)$-modules. We may assume $h = 1$. Let $I \sub G_0$ be the Iwahori subgroup. Using the Bruhat decomposition
$$G_0 = \coprod_{w \in W} IwB_0$$ 
\noindent we may write $g = k_1wk_2$ with $k_1\in I$,  $w \in W$ and $k_2 \in B_0$. Since $T_0\subset I \cap B_0$ and $w$ normalizes $T_0$ we may suppose that $k_2 \in U_{0}.$ As 
$\delta_u\star \frn_r=\frn_r$ for $u\in U_{0}$ we may then suppose that $k_2=e.$
By the Iwahori decomposition 
$$I = (I \cap {B_{0}^-}) \cdot (I \cap U_0)$$
we may write $k_1=hu^+$ with $u^+ \in  I \cap {U_{0}}$ and $h \in I \cap {B_0^-} $.
By shifting the factors from the left to the right it suffices to consider an isomorphism
$$\phi: \delta_w \star \frn_r \stackrel{\simeq}{\lra} \delta_h \star \frn_r \,.$$
with $h\in B^-_0.$

Suppose that $w\neq 1$. Then there is a positive root  $\beta \in \Phi^+$ such that $w^{-1}\beta < 0$, hence $w^{-1}(-\beta) > 0$, cf. \cite[2.3]{Car}. Consider a non-zero element element $y \in \frg_{-\beta}$, and let $v^+ \in N$ be a generator (the unique Whittaker vector) of $N.$ Then we have the following identity in $\delta_w \star \frn_r$,
$$y \cdot_w v^+ = \Ad(w^{-1})(y) \cdot v^+ = \eta(\Ad(w^{-1})(y)) v^+.$$
On the other hand,  we have $\phi(v^+) = v$ for some nonzero $v \in \frn_r$. And therefore 
$$\eta(\Ad(w^{-1})(y)) v= \phi(y \cdot_w v^+) = y \cdot_h \phi(v^+) = y \cdot_h v = \Ad(h^{-1})(y) \cdot v \;.$$
Since $h\in B^-_0$ we see that $\Ad(h^{-1})(y)\in \fru^-.$  
But $U(\fru^-)$ acts freely on $N$ since $U(\frg)$ is free over $Z(\frg)\otimes U(\fru)$, cf. Theorem \ref{free}.
It follows that $U(\fru^-)$ acts freely on $\frn_r$, as well. Thus  $\Ad(h^{-1})(y) \cdot v$
cannot be a scalar multiple of $v.$

So we may suppose that $w=1$ and that there is an isomorphism
$$\phi: \frn_r \stackrel{\simeq}{\lra} \delta_h \star \frn_r \,$$
with $h\in B_0^-.$ Write $h=tu$ with $t \in T_0$ and $u \in U^-_{0}.$ By shifting $t$ to the left hand side we get an isomorphism
$$\phi: \delta_t \star \frn_r \stackrel{\simeq}{\lra} \delta_u \star \frn_r \,.$$
Let again $v=\phi(v^+).$ 
For $x \in \fru$ we get 
\begin{numequation}\label{equation_Whittaker}
\eta(\Ad(t^{-1})(x)) v= \phi(x \cdot_t v^+) = x \cdot_u \phi(v^+) = x \cdot_u v = \Ad(u^{-1})(x) \cdot v \;.
 \end{numequation}
Thus $v$ is an eigenvector for the action of 
 $\Ad(u^{-1})(x)$.
Next we consider the formal completion $\hat{N}$ of $N.$
In contrast to loc.cit., i.e. for objects in the BGG-category $\cO$,  it is defined as follows.

Let $y_1,\ldots, y_t$ be basis of $\fru^-$. Then  $\hat{U}(\fru^-)=\prod_{\underline{n}\in \bbN_0^t} K y_1^{n_1}\cdots y_r^{n_r}$ is the formal completion of $U(\fru^-)$.
This in an obvious way a module for $U(\fru^-)$ and the module structure extends to a representation of $U^-$. Indeed we consider the exponential $\exp_{U^-}: \fru^- \dashrightarrow U^-$ and the inverse map $\log_{U^-}$. We  define for $u \in U^-$ and $v = \sum_{\underline{n}} v_{\underline{n}} \in \hat{U}(\fru^-)$:

$$\delta_u \cdot v = \sum_{\underline{n}} \sum_{i \geq 0} \frac{1}{i!} \log_{U^-}(u)^i \cdot v_{\underline{n}} \,.$$

\noindent Note that this sum is well-defined in $\hat{U}(\fru^-)$, because $\log_{U^-}(u)$ is in $\fru^-$, and there are hence only finitely many terms of a given homogeneous degree in this sum.  Moreover, it gives an action of $U^-$ on $\hat{U}(\fru^-)$ because $
\exp(\log_{U^-}(u_1)) \cdot \exp(\log_{U^-}(u_2)) 
= \exp\big(\cH(\log_{U^-}(u_1),\log_{U^-}(u_2))\big) 
 = \exp(\log_{U^-}(u_1u_2))$ where $\cH(X,Y) = \log(\exp(X)\exp(Y))$ is the Baker-Campbell-Hausdorff series (which converges on $\fru^-$, as $\fru^-$ is nilpotent). It is then immediate that this action is compatible with the action of $U(\fru^-)$. 
 
 Since $U(\frg)$ is finite free over $Z(\frg)U(\fru)$ we find by the Harish-Chandra isomorphism  $h_1,\ldots,h_r\in U(\frt)$ such that the elements of $U(\fru^-)\oplus U(\fru^-)h_1 \oplus\cdots \oplus U(\fru^-)h_r$ form a basis.  Then
\begin{numequation}\label{presentation_N}
 \hat{N} = \hat{U}(\fru^-) \times  \hat{U}(\fru^-)t_1 \cdots \times \hat{U}(\fru^-)t_r \otimes K_{\Theta,\eta}.
 \end{numequation}
 Here we have again an action of $U^-$ which is given component wise.
 
 The identity (\ref{equation_Whittaker}) implies then the following identity in $\hat{N}$
$$\delta_{u^{-1}} \cdot( x \cdot (\delta_u \cdot v)) = \Ad(u^{-1})(x) \cdot v = \eta(Ad(t^{-1})x)v \hskip5pt ,$$
for all $x \in \fru$, and thus, multiplying both sides of the previous equation with $\delta_u$
$$x \cdot (\delta_u \cdot v) = \eta^t(x):=\eta(Ad(t^{-1})x) \delta_u \cdot v.$$
Hence $\delta_u \cdot v \in \hat{N}$ is a Whittaker vector (since $\eta^t$ defines also a non-degenerate Whittaker function)  and must therefore be equal to a non-zero scalar multiple of $v^+$. After scaling $v$ appropriately we have $\delta_u \cdot v = v^+$ or
$v = \delta_{u^{-1}} \cdot v^+$ with
\begin{numequation}\label{series}
\delta_{u^{-1}} \cdot v^+ = \sum_{n \ge 0} \frac{1}{n!} (-\log_{U_P^-}(u))^n \cdot v^+ =: \Sigma \hskip5pt .
\end{numequation}

As in loc.cit. we shall show that the series $\Sigma$, which is an element of $\hat{N}$, does in fact not lie in the image of $\frn_r$ in $\hat{N}$, if $u \notin U_{0,r}$.
For this we write
$$\log_{U_P^-}(u) = \sum_{\beta \in B(u)} z_\beta \;,$$
\noindent with a non-empty set $B(u) \sub \Phi^+$ and non-zero elements $z_\beta \in \frg_{-\beta}$. Put
$$B^+(u) = \{\beta \in B(u) \midc z_\beta \notin O_L y^{(m)}_\beta \} \;.$$
\noindent This is a non-empty subset of $B(u)$ since $u \notin U_{0,r}$. Put $B'(u) = B(u) \setminus B^+(u)$,
$$z^+ = \sum_{\beta \in B^+(u)} z_\beta \hskip10pt \mbox{ and } \hskip10pt z' = \sum_{\beta \in B'(u)} z_\beta = \log_{U^-}(u) - z^+ \;.$$
\noindent Then $z' \in \Lie_\Zp(H^{-,m})$, and thus $\exp(z') \in H^{-,m}$. The element $u_1 = u\exp(z')^{-1} = u\exp(-z')$ does not lie in $H^{-,m}$, and $\delta_u \star \frn_r \cong \delta_{u_1} \star \frn_r$. We may hence replace $u$ by $u_1 = u \exp(-z')$. By iterating this process we can assume that $B'(u)=\emptyset$, cf. loc.cit.

Next we chose as in loc.cit. an extremal element $\beta^+$ among the $\beta \in B(u) = B^+(u)$, i.e. one of the minimal generators of the cone $\sum_{\beta \in B(u)} \bbR_{\ge 0}\beta$
inside $\bigoplus_{\alpha \in \Delta} \bbR \alpha$. Then 
if we write $\log_{U^-}(u))=z_{\beta^+} + z_{\beta_2} + \ldots + z_{\beta_k}$ where $B(u) = \{\beta^+, \beta_2, \ldots, \beta_k \}$ it suffices to see that the series $\frac{(-1)^n}{n!}z_{\beta^+}^n \cdot v^+$ does not converge to an element in $\frn_r$.

Because $B'(u)=\emptyset$, we have $z_{\beta^+} = \alpha\pi^{-k}y^{(m)}_{\beta^+}$ for some $k\geq 1$ and $\alpha \in O_L^\ast.$ We then get 
$$||(-1)^n\frac{z_{\beta^+}^n}{n!}||_s^{(m)} =  |\pi^{-k}| \frac{s^n}{|n!|}=|\frac{\pi^{-n(k-1)}}{n!}| |(\frac{s}{\pi})|^n.$$ By identity (\ref{r_and_s}), we have $|\frac{s}{\pi})|>p^{-\frac{1}{(p-1)}} $ which is the radius of convergence of the exponential series. As further $|\pi^{-1}|>1$ we see that $\Sigma$ does not converge.

\vspace{0.5cm}
Now we show that $c-\Ind^G_{G_0} X'$ is topologically irreducible. For this we check the levelwise Mackey criterion by Bode \cite[Theorem A.23]{O2}. For $r$ as above let $X_r:=D_r(G_0) \otimes_{D(G_0)} X$ be the completion of $X$.  For a non-trivial double coset representative $g\in G_0\backslash G/ G_0$, let $G_0^g:=g G_0g^{-1}$ and $F= G_0 \cap G_0^g$.  We have to show that the $D_r(F)$-modules $X_r$ and $\delta_g\star X_{r}$ are disjoint i.e., that they do not have a simple constituent in common. We shall show that these modules are even as $U_r(\frg)$-modules disjoint. For this it suffices to see
that there is no non-trivial homomorphism $\delta_{h_1}\star \frn_r \to \delta_{g \cdot h_2} \frn_r$ for $h_1,h_2\in G_0.$

By the Cartan decomposition we may suppose that  $g\in T^-=\{t\in T \mid |\alpha(t)|\geq 1 \,\forall \alpha\in \Delta \}$ is an element of the torus. So suppose that $g=t\in T^-$ and that  $\phi:\delta_{h_1} \star \frn_r \to \delta_t{h_2}\star \frn_r$ is an isomorphism. We may assume by shifting factors that $h_2=1.$

Suppose  first the case $h_1=1.$
Consider   the formal completion  $\hat{N}$ of $N.$ Then $$H^0(\frn,N)= H^0(\frn,\hat{N})=Kv^+.$$ Since $\hat{N}$ is also the formal completion of $\delta_t\star N$ and $\delta_t \star \eta$ is also a non-degenerate Whittaker function for $v^+$  we assume that $\phi(v^+)=v^+.$ But then
we have $\eta(Ad(t)x)\phi(v^+)= \phi(xv^+)=\phi(\eta(x)v^+)=\eta(x)\phi(v^+)$ for all $x\in \fru.$ Thus  $\eta(Ad(t)x)=\eta(x)$ for all $x\in \fru$ which is impossible since $\eta$ is non-degenerate.
In the case $h_1\neq 1$ we can repeat the proof of Theorem 5.3 using the Bruhat decomposition. Observe that $\delta_t \star \frn$ is also Whittaker module with respect to the Whittaker function $\eta^t.$

Hence the summands of the two direct summands are pairwise not isomorphic  which finishes our proof. 
 \end{proof}

 \vskip8pt
 

A natural generalization of the above theorem is to consider also a smooth contribution $V$ in our functor.
By functoriality $V$ has to be irreducible in order that
$\cF^G_{\hat{\eta}}(N_{\Theta,\eta},V)$ is topologically irreducible as $G$-representation. This condition is also sufficient:
 
\begin{thm} Let $\hat{\eta}$ and $\Theta$ be chosen as above. Let $V$ be a smooth irreducible representation of $U_0$. Then $\cF^G_{\hat{\eta}}(N_{\Theta,\eta},V)$ topologically irreducible as $G$-representation.
\end{thm}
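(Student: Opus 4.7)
The plan is to follow the structure of the proof of the preceding theorem, inserting $V'$ as a finite-dimensional smooth tensor factor. By the levelwise Mackey criterion of Bode \cite[Appendix]{O2}, it suffices to show that $X:=X_{\hat{\eta}}(N_{\Theta,\eta},V)=D(G_0)\otimes_{D(\frg,U_0)}(N_{\Theta,\eta}\otimes V')$ is a simple $D(G_0)$-module, and that for every non-trivial double coset representative $g\in G_0\backslash G/G_0$ the $D_r(G_0\cap gG_0g^{-1})$-modules $X_r$ and $\delta_g\star X_r$ share no simple constituent. Since $V$ is smooth admissible and $U_0$ is compact, $V'$ is finite-dimensional and the $U_0$-action on $V'$ factors through a finite quotient $U_0/U_0^\ast$. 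I would choose the uniform pro-$p$ open normal subgroup $H=H_0\subset G_0$ of the preceding proof small enough that $H\cap U_0\subset U_0^\ast$; in particular $H\cap U_0$ acts trivially on $V'$, and because $V$ is smooth the derivative of the $U_0$-action on $V'$ vanishes, so $\frg$ acts solely on the first tensor factor of $N_{\Theta,\eta}\otimes V'$.

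With this choice, the decomposition
\begin{equation*}
X = \bigoplus_{g\in G_0/H}\delta_g\star\bigl(D(HU_0)\otimes_{D(\frg,U_0)}(N_{\Theta,\eta}\otimes V')\bigr)
\end{equation*}
parallels the one in the trivial case. After passage to the $r$-completion and to $U_r(\frg)$, it suffices to study $\frn_r\otimes V'$ as a module over the subalgebra of $D_r(HU_0)$ generated by $U_r(\frg)$ and $K[U_0]$. Because $U_r(\frg)$ acts only on the $\frn_r$-factor, simplicity of $\frn_r$ as a $U_r(\frg)$-module (established in the preceding theorem) together with irreducibility of $V'$ as a $K[U_0]$-module yields, by a standard density argument of the type used in \cite{OS2}, that $\frn_r\otimes V'$ is simple over the subalgebra in question. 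Extending to $D_r(HU_0)$ and reassembling over $G_0/H$ as in the preceding proof gives the simplicity of $X$ as a $D(G_0)$-module.

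For the classification of isomorphisms between distinct $\delta_g$-twists and for the Mackey disjointness, I would follow verbatim the Bruhat and Iwahori reductions of the preceding proof to obtain an isomorphism $\phi\colon\delta_t\star(\frn_r\otimes V')\stackrel{\simeq}{\lra}\delta_u\star(\frn_r\otimes V')$ with $t\in T_0$ and $u\in U_0^-$. Since $\fru$ acts only on the first factor of $\hat{N}\otimes V'$, the space of $(\fru,\eta^t)$-Whittaker vectors in $\hat{N}\otimes V'$ equals $W\otimes V'$, where $W\subset\hat{N}$ is the one-dimensional Whittaker subspace of parameter $\eta^t$ (non-degenerate since $\eta$ is). For every $v_0\in V'$ the element $v^+\otimes v_0$ is a Whittaker vector in $\delta_t\star(\frn_r\otimes V')$ of parameter $\eta^t$, so the analogue of identity (\ref{equation_Whittaker}) forces $\delta_u\cdot\phi(v^+\otimes v_0)\in W\otimes V'$; combined with the twisted $U_0$-equivariance of $\phi$ this yields $\phi(v^+\otimes v_0)=\delta_{u^{-1}}\cdot(w^+\otimes\psi(v_0))$ for a fixed generator $w^+$ of $W$ and a $K[U_0]$-intertwiner $\psi$ of $V'$, hence a scalar by Schur. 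The extremal-root convergence argument for the series (\ref{series}) then applies unchanged, since $V'$ is finite-dimensional and contributes trivially to the norm on $D_s(H^{-,m})$, and it forces $u\in U_{0,r}$; the residual $t\neq 1$ case is excluded exactly as in the preceding proof, where the relation $\eta^t=\eta$ on $\fru$ contradicts non-degeneracy of $\eta$.

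The main obstacle is the first step: showing that $\frn_r\otimes V'$ is genuinely simple, rather than merely isotypic, over the subalgebra generated by $U_r(\frg)$ and $K[U_0]$. A clean argument requires either Schur's lemma for the Banach module $\frn_r$ (possibly via base change to $\bar K$ and Galois descent) or the direct density argument of \cite{OS2}, exploiting finite-dimensionality of $V'$ and irreducibility under the finite quotient $U_0/U_0^\ast$. Once this step is secured, the remainder of the argument is formally parallel to the trivial-coefficient case.
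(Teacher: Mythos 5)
Your plan is a genuinely different route from the paper, and you have correctly located where it gets stuck. You try to re-run the levelwise Mackey criterion directly on $X_{\hat{\eta}}(N,V)=D(G_0)\otimes_{D(\frg,U_0)}(N\otimes V')$, which forces you to prove simplicity of $\frn_r\otimes V'$ over the algebra generated by $U_r(\frg)$ and $K[U_0]$. Your reduction is fine as far as it goes: since $\frg$ kills $V'$ and $U_r(\frg)$ acts only on the first factor, any $U_r(\frg)$-submodule $M\subset\frn_r\otimes V'$ would be of the form $\frn_r\otimes W$ \emph{provided} $\End_{U_r(\frg)}(\frn_r)=K$; $U_0$-stability then forces $W=V'$. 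But absolute simplicity of $\frn_r$ (as opposed to mere simplicity, which is what the cited result from \cite{F}/\cite{OS1} gives) is nowhere established, and Schur's lemma in the Banach-module setting only yields that the endomorphism ring is a division algebra, not $K$ itself. The base-change-and-descent idea you float is not obviously available either, since $N_{\Theta,\eta}$ over $\bar K$ need not stay simple for an arbitrary $\Theta$. So the ``main obstacle'' you name is a real gap, not a routine verification.

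The paper avoids this entirely by following the structure of \cite{OS2}: it uses the trivial-coefficient theorem as a black box and does not attempt to prove simplicity of $X_{\hat{\eta}}(N,V)$ as a $D(G_0)$-module at all. Instead, for any nonzero closed $G$-subrepresentation $W\subset\cF^G_{\hat{\eta}}(N,V)$ it forms the smooth $G$-representation $W_{sm}=\varinjlim_H\Hom_H(c\text{-}\Ind^G_{U_0}(K_{\hat{\eta}}')^\frd|_H,\,W|_H)$, identifies $\cF^G_{\hat{\eta}}(N,V)_{sm}$ with the smooth compact induction $c\text{-}\mathrm{ind}^G_{U_0}(V)$ by exploiting the $H$-isotypic decomposition (here the irreducibility and mutual non-isomorphism of the pieces $\Ind^H_{H\cap\gamma U_0\gamma^{-1}}((K_{\hat{\eta}}')^\gamma)^\frd$ from Theorem~\ref{irredH} is what is really used), shows $W_{sm}\neq 0$, and then proves directly that $c\text{-}\Ind^G_{U_0}(K_{\hat{\eta}}')^\frd\otimes_K W_{sm}$ surjects onto $\cF^G_{\hat{\eta}}(N,V)$ by a partition-of-unity argument over a compact transversal $S$. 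This bootstraps irreducibility from the known trivial-$V$ case without ever needing $\End_{U_r(\frg)}(\frn_r)=K$, which is the precise piece your proposal is missing. Your Whittaker-vector and extremal-root analysis in the second half of the proposal is correct and parallel to the paper's Theorem~\ref{U_r_modules}, but it is downstream of the unproven simplicity claim and so does not salvage the argument on its own.
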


\begin{proof} 
The proof follows the same lines as in \cite{OS2}
We present a compactified version and refer for more details to loc.cit.

For a closed $G$-subrepresentation $W \sub \cF^G_{\hat{\eta}}(N,V))$, we put
$$W_{sm} = \varinjlim_H \Hom_H(c-\Ind_{U_0}^G(K_{\hat{\eta}}')^\frd|_H, W|_H) \,,$$
where the limit is taken over all compact open subgroups $H$ of $G$. Then $G$ acts on $W_{sm}$  via 
$(g\phi)(f) = g(\phi(g^{-1}(f))$
and yields by construction a smooth representation. 
We consider the continuous map of $G$-representations
$$\Phi_W: c-\Ind_{U_0}^G(K_{\hat{\eta}}')^\frd \otimes_K W_{sm} \lra W \,,
\; f \otimes \phi \mapsto \phi(f) \,.$$
Let $H \sub G_0$ be an open normal subgroup. We have
\begin{numequation}\label{generaldecomp}
c-\Ind_{U_0}^G(K_{\hat{\eta}}')^\frd|_H \; = \bigoplus_{\gamma \in H \backslash G/U_0}
\Ind^H_{H \cap \gamma U_0 \gamma^{-1}}((K_{\hat{\eta}}')^\gamma)^\frd 
\end{numequation}
where  $(K_{\hat{\eta}}')^\gamma$ is $K_{\hat{\eta}}$ with the twisted action.

\noindent By Theorem \ref{irredH}  all $H$-representations on the right hand side of (\ref{generaldecomp}) are topologically irreducible and pairwise non-isomorphic. Similarly, for the representation $c-\Ind_{U_0}^G(K_{\hat{\eta}}' \otimes_K V)^\frd$ we have

$$\Ind_{U_0}^G(K_{\hat{\eta}}' \otimes_K V)^\frd|_H = \bigoplus_{\gamma \in H \backslash G_0/U_0}
\Ind^H_{H \cap \gamma U_0 \gamma^{-1}}((K_{\hat{\eta}}')^\gamma \otimes_K V^\gamma)^\frd \,.$$

\noindent Denote by $V^{H \cap U_0} \sub V$ the subspace of fix vectors for  $H \cap U_0$. Then
$$\left(\Ind^H_{H \cap \gamma U_0 \gamma^{-1}}((K_{\hat{\eta}}')^\gamma)\right)^\frd \otimes_K V^{H \cap U_0} \cong\Ind^H_{H \cap \gamma U_0 \gamma^{-1}}((K_{\hat{\eta}}')^\gamma \otimes_K (V^{H \cap K_0})^\gamma)^\frd$$ is a subspace of  $\Ind^H_{H \cap \gamma U_0 \gamma^{-1}}((K_{\hat{\eta}}')^\gamma \otimes_K V^\gamma)^\frd.$
Let
$$\phi: c-\Ind_{U_0}^{G_0}(K_{\hat{\eta}}')^\frd|_H \ra c-\Ind_{U_0}^{G_0}(K_{\hat{\eta}}' \otimes_K V )^\frd|_H$$
\noindent be a continuous homomorphism of $H$-representations. For each $\gamma,$ let $f_\gamma \in \Ind^H_{H \cap \gamma U_0 \gamma^{-1}}((K')^\gamma)^\frd$
be a non-zero element.  Because of the irreducibility the map $\phi$ is uniquely determined by the images $\phi(f_\gamma)$ which are rigid-analytic functions on the components of some covering of $G_0$ by 'polydiscs' $\Delta_i$ and which take values in a $BH$-subspace of $K' \otimes_K V$. But any such $BH$-subspace is finite-dimensional, hence contained in a subspace of the form $K' \otimes_K V^{H' \cap U_0}$ for some open  subgroup $H' \sub H$ which is normal in $G_0$. Shrinking $H'$ we may further suppose that each $\Delta_i$ is $H'$-stable and that each $\phi(f_\gamma)$ takes values in $K' \otimes_K V^{H' \cap U_0}$ and 
$$\phi(h.f_\gamma)(g) = [h.\phi(f_\gamma)](g) = \phi(f_\gamma)(h^{-1}g) \in K' \otimes_K V^{H'\cap U_0}$$
for any $h \in H$. Because the subspace of $\Ind^H_{H \cap \gamma U_0 \gamma^{-1}}((K')^\gamma)^\frd$ generated by the functions $h.f_\gamma$, $h \in H$, is dense in $ \Ind^H_{H \cap \gamma U_0 \gamma^{-1}}((K')^\gamma)^\frd$, we have $\phi(f)(g) \in K_\eta' \otimes_K V^{H' \cap U_0}$ for all $f \in  \Ind^H_{H \cap \gamma U_0 \gamma^{-1}}((K')^\gamma)^\frd$. It follows that $\phi$ induces a continuous map of $H'$-representations
$$\phi: c-\Ind_{U_0}^{G_0}(K')^\frd|_{H'} \ra \left[c-\Ind^{G_0}_{U_0} (K')^\frd \right]|_{H'} \otimes_K V^{H' \cap U_0}$$
Here the irreducible $H'$-representation $\Ind^{H'}_{H' \cap \gamma U_0 \gamma^{-1}}((K)^\gamma)^\frd$ contained in the left hand side is mapped via $\phi$ to  $\left[\Ind^{H'}_{H' \cap \gamma U_0 \gamma^{-1}}((K')^\gamma)^\frd\right] \otimes_K V^{H' \cap U_0}$ and the restriction of $\phi$ to this summand has the shape $f \mapsto f \otimes v$ for some $v \in V^{H' \cap U_0}$. As in loc.cit.  one deduces from this that the canonical map 
\begin{eqnarray*}
 c-\ind_{U_0}^G(V) & \ra &  \cF^G_{\hat{\eta}}(N,V))_{sm} \\ \varphi & \mapsto & [f \mapsto \Phi(f \otimes \varphi)], 
\end{eqnarray*}
is an isomorphism, where $c-\ind_{U_0}^G(V)$ denotes the smoothly compact induction.
Moreover by the same reasoning as in loc.cit. one verifies  that  $W_{sm} \neq \{0\}$ for any non-zero closed $G$-invariant subspace $W \sub \cF^G_{\hat{\eta}}(N,V)$.   

 Claim: $c-\Ind^G_{U_0}(K_\eta')^\frd \otimes_K W_{sm}$ surjects onto $\cF^G_{\hat{\eta}}(N,V)$. 
 
 For this it suffices to show that
 $c-\Ind^{G_0}_{U_0}(K_\eta')^\frd \otimes_K W^0_{sm}$ surjects onto $X(N_{\Theta,\eta},V)'$ where $W^0_{sm}:=W_{sm}\cap \ind^{G_0}_{U_0}(V)$.
 
  Consider the $U_0$-morphism 
$\Pi: W^0_{sm} \ra V$ given by $\varphi \mapsto \varphi(1)$. As $W_{sm}\neq (0)$ and therefore $W^0_{sm}\neq (0)$ this map is surjective.
Therefore, for any $v \in V$ there is some $\varphi \in W_{sm}^0$ with $\varphi(1) = v$. As $W_{sm}^0$ is a $G_0$-representation we may replace $1$ by any  $g \in G_0$, i.e.  $\varphi(g) = v$. Then, on a neighborhood $N$ of $g$, the map $\varphi$ is constant with value $v$.

Let $S \sub G_0$ be a compact locally analytic subset such that $S \cdot U_0=G_0$. Let $S' \sub S$ and $C\subset U_0$  be compact open subsets such that $S' \cdot C \subset N$. Let $f: G \ra K'$ be a function whose support is contained in $S' \cdot C$. Then the function $S \ra K' \otimes_K V$, $s \mapsto f(s) \otimes \phi(s)$, is equal to the function $S \ra K' \otimes_K V$, $s \mapsto f(s) \otimes v$.

\vskip5pt

Let $f \in \Ind^{G_0}_{U_0}(K_\eta' \otimes_K V)^\frd$ be any element. Then $f$ is uniquely determined by its restriction to $S$.
As we have seen above, the set $f(G_0)$ is contained in a finite-dimensional vector space 
$K_\eta' \otimes_K V_0$ with $V_0 \sub V$. Let $v_1, \ldots, v_r$ be a basis of $V_0$. Write 
$f(s) = f_1(s) \otimes v_1 +\ldots + f_r(s) \otimes v_r$ with locally analytic $K_\eta'$-valued functions $f_i$ on $S$. 
Extend each function $f_i$ to $G_0$ by $f_i(s \cdot u) = \hat{\eta}(u^{-1})f_i(s)$. 
Then $f_i \in \Ind^{G_0}_{U_0}(K_\eta')$ for all $i$. In fact, one verifies  that 
$f_i \in \Ind^{G_0}_{U_0}(K'_\eta)^\frd$ for all $i$. For any $s \in S$ choose some $\varphi_{i,s} \in W_{sm}^0$ such that $\varphi_{i,s}(x) = v_i$ for all $x$ in a compact open neighborhood $N_{i,s} \sub S$ of $s$. As $S$ is compact, finitely many of the $N_{i,s}$ will cover $S$. 
We can then choose a (finite) disjoint refinement $(N_{i,j})_j$ of the finite covering. Then we restrict $f_i$ to each of these $N_{i,j} \cdot U_0$, and extend it by $0$ to $(S
  \setminus N_{i,j}) \cdot U_0$. Denote the function thus obtained by $f_{i,j}$. Again, all $f_{i,j}$ lie in $c-\Ind^{G_0}_{U_0}(K')^\frd$, and $x \mapsto f_{i,j}(x) \otimes \phi_{i,s(i,j)}(x) = f_{i,j}(x) \otimes v_i$, for a suitably chosen $s(i,j) \in S$. Obviously, we have: $f = \sum_{i,j} f_{i,j} \otimes \phi_{i,s(i,j)}$. Indeed, by construction,
both $f|_S$ and the restriction of the sum to $S$ coincide. And both are functions in $c-\Ind^{G_0}_{U_0}(K' \otimes_K V)^\frd$; therefore, they are equal.
The claim follows.
\end{proof}

\vspace{1cm}
\section{Cuspidality}
\setcounter{enumi}{0}

In this final section we will show that the ordinary (Hausdorff) Jacquet modules of $\cF^G_{\hat{\eta}}(N_{\Theta,\eta},V)$ with respect to  proper parabolic subgroup vanish. However, we will see that the higher homology groups do not vanish in general. We keep the notation from the previous section and fix further a smooth representation $V$ of $U_0.$ For convenience      
we set $N= N_{\Theta,\eta}$.  

\begin{prop}
 Let $P\subset G$ be a parabolic subgroup and $U_P$ its unipotent radical. Set $U_{P,0}=G_0 \cap U_P.$ Then $H^0(U_{P,0},X_{\hat{\eta}}(N,V))=0.$
\end{prop}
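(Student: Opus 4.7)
The plan is to show $X_{\hat{\eta}}(N,V)^{U_{P,0}} = 0$ by reducing to a $\fru_P$-invariance statement and exploiting the non-degeneracy of $\eta$ together with the Harish-Chandra/PBW structure of $N = N_{\Theta,\eta}$. Since the $U_{P,0}$-action on $X_{\hat{\eta}}(N,V)$ is locally analytic and $U_{P,0}$ is a $p$-adic analytic group, its continuous invariants coincide with its $\fru_P$-invariants, so it suffices to prove $X_{\hat{\eta}}(N,V)^{\fru_P} = 0$. Choosing a locally $L$-analytic section $\cH \subseteq G_0$ of $G_0 \to G_0/U_0$ as in the proof of Proposition~\ref{biexact} yields a topological $K$-linear isomorphism $X_{\hat{\eta}}(N,V) \cong D(\cH) \otimes_K (N \otimes V')$.

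Under this isomorphism, the left action of $z \in \fru_P \subseteq \frg$ acquires a pointwise description: using the identity $z \cdot \delta_h = \delta_h \cdot \Ad(h^{-1})(z)$ in $D(G_0)$ for $h \in \cH$ and the fact that $\Ad(h^{-1})(z) \in \frg \subseteq D(\frg, U_0)$ crosses the tensor product, we obtain
\[
z \cdot (\delta_h \otimes m) \;=\; \delta_h \otimes \Ad(h^{-1})(z) \cdot m,
\]
with $\frg$ acting trivially on $V'$. The Whittaker input is: since $P$ is proper and $\eta$ is non-degenerate, $\fru_P$ contains a simple root vector $x_\alpha$ with $\eta(x_\alpha) \neq 0$. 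By Theorem~\ref{free}, $N$ is free of finite rank as a $U(\fru^-)$-module, and a direct computation on the $\fru^-$-filtration shows that any $y \in \frg$ with triangular decomposition $y = y^- + y^0 + y^+$ acts injectively on $N$ whenever $y^- \neq 0$ (the leading term shifts the $\fru^-$-degree and is injective by freeness of $U(\fru^-)$) or $y^- = 0$ with $\eta(y^+) \neq 0$ (the leading term is non-zero scalar multiplication by $\eta(y^+)$). For $h$ in an open dense subset of $\cH$ the conjugate $\Ad(h^{-1})(x_\alpha)$ satisfies one of these: on the big Bruhat cell its leading $\fru$-coefficient is $\alpha(h^{-1})\eta(x_\alpha) \neq 0$, while on other cells one obtains a non-zero $\fru^-$-component after an appropriate Weyl twist. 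The pointwise formula then forces the ``value'' at $h$ of any $\fru_P$-invariant $\xi \in X$ to vanish on this dense subset, hence $\xi = 0$.

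The main obstacle is rigorizing this distributional pointwise argument, since an $\fru_P$-invariant element of $X$ could a priori be supported on a proper closed subset of $\cH$ where our injectivity argument fails. A clean route is to work levelwise on the Banach completions $X_r = D_r(G_0) \otimes_{D_r(\frg, U_0)} (N \otimes V')$ from Section~5 and verify injectivity of the chosen element of $\fru_P \subseteq U_r(\frg)$ on $X_r$ via the associated graded with respect to the canonical $p$-valuation filtration, mirroring the injectivity of $U(\fru^-)$ on $\frn_r$ used in the proof of Theorem~\ref{irredH}.
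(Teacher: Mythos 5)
Your reduction to $X_{\hat\eta}(N,V)^{\fru_P}=0$ and your identification of the key Lie-algebraic input (injectivity of suitable $\Ad$-conjugates of a simple root vector $x_\alpha$ with $\eta(x_\alpha)\neq 0$ on $N$, exploiting freeness of $U(\frg)$ over $Z(\frg)U(\fru)$) both match the paper. The issue is the step you yourself flag as ``the main obstacle'': the passage from the formula $z\cdot(\delta_h\otimes m)=\delta_h\otimes\Ad(h^{-1})(z)\cdot m$, which holds on Dirac distributions, to the conclusion that a $\fru_P$-invariant element of the completed tensor product $D(\cH)\otimes_K(N\otimes V')$ must vanish. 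General elements are not (finite or convergent) sums of $\delta_h\otimes m$'s supported away from a closed bad locus, and there is no evaluation-at-a-point map on a distribution space, so the pointwise argument does not close. Your proposed fix --- pass to $X_r$ and compute on the associated graded of the $p$-valuation filtration --- is not yet an argument: a single element of $\fru_P\subset U_r(\frg)$ will not act injectively on $X_r$ as a whole, because the $\Ad$-twist needed to make it injective varies over $G_0$.

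What actually closes the gap in the paper is a decomposition you do not have. Instead of the section $\cH$, the proof uses the Iwahori--Bruhat decomposition $G_0=\coprod_{w\in W}IwI$ (with $IwB_0=IwU_0$) to write $D(G_0)\otimes_{D(\frg,U_0)}N\simeq\bigoplus_{w\in W}D(w^{-1}Iw)\otimes_{D(\frg,\,w^{-1}Iw\cap U_0)}N$, then inside each $w$-block applies the Iwahori triangular decomposition of $w^{-1}Iw$ and the isomorphism $D_r(\frg,w^{-1}Iw\cap U_0)\cong U_r(\fru_0^-)\hat\otimes U_r(\frt)\hat\otimes D_r(w^{-1}Iw\cap U_0)$ to obtain, on the Banach layer, a \emph{finite} direct sum $\cN_r^w\simeq\bigoplus_{u,v}\delta_u\delta_v\otimes N_r^w$ over finitely many $u\in U_0^-$, $v\in T_0$. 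On each summand the element of $\fru_P$ acts through the single twist $\Ad((uv)^{-1})\Ad(w^{-1})$, and one checks injectivity on $N_r^w$ case by case ($w\neq 1$; $w=1,u=1$; $w=1,u\neq 1$). This finitization is precisely what replaces your ``open dense subset of $\cH$'' heuristic and makes the conclusion $H^0(\fru_P,\cN^w)=0$ legitimate after passing to the limit over $r$. Without it, the argument does not go through. As a minor secondary point, your injectivity dichotomy ($y^-\neq0$ or $y^-=0,\eta(y^+)\neq0$) leaves open the case $y^-=0,\eta(y^+)=0$; the paper avoids this by showing, via the Iwahori decomposition, that each twist that actually arises falls into one of the two good cases.
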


\begin{proof}
 We follow the proof of  \cite[Thm. 6.1]{O2}. We clearly have $H^0(U_{P,0},X_{\hat{\eta}}(N,V))=H^0(\fru_P,X_{\hat{\eta}}(N,V))^{U_{P,0}}$ so that it suffices to see that  $H^0(\fru_P,X_{\hat{\eta}}(N,V))=0.$ Since the action of $\fru_P$ on $V$ is trivial we may suppose that $V={\bf 1}^\infty$ is the trivial representation, cf. \cite[Lemma 3.4]{OSch}.  
   
 Let $I  \sub G_0$ be the standard Iwahori subgroup. 
 For $w\in W$, let $N^w$ be the $D(\frg, I \cap wU_0w^{-1})$-module with underlying
vector space $N$ twisted action given by composition of the given one with conjugation with $w$. Let 
 \begin{numequation}\label{Iwahori}
w^{-1} I w=(U_{0}^-
    \cap w^{-1} I w)(T_0 \cap w^{-1} I w) (U_{0} \cap w^{-1} I w)
\end{numequation}
be the induced Iwahori decomposition, cf. \cite[Lemma
    $3.3.2$]{OS1}. The Bruhat decomposition $G_0=\coprod_{w \in W} I w I$ induces a   decomposition 
  \begin{numeqnarray}\label{decomposition_Bruhat}
    D(G_0) \otimes_{D(\frg,U_0)} N  & \simeq &  \bigoplus_{w \in W}
     D(I) \otimes_{D(\frg, I \cap wU_0w^{-1})}  N^w \\\nonumber
     & \simeq &  \bigoplus_{w \in W}  D(w^{-1}I w) \otimes_{D(\frg, w^{-1}I w \cap U_0)} N 
      \end{numeqnarray}
      since $IwB_0=IwU_0.$
      For each $w \in W$, we have
  $$H^0(\fru_P, D( I ) \otimes_{D(\frg, I \cap wU_0w^{-1})} N^w)
    \simeq H^0(\mathrm{Ad}(w^{-1}) (\fru_P), D(w^{-1}I w) \otimes_{D(\frg,
      w^{-1}Iw \cap U_0)}  N).$$
  We can write each summand in the shape 
  $$\cN^w:=D(w^{-1}I w) \otimes_{D(\frg, w^{-1}I w \cap U_0)} N =
  \varprojlim_r \cN_r^w$$    where $\cN_r^w=D_r(w^{-1}I w) \otimes_{D(\frg, w^{-1}I w \cap U_0)} N$. If we denote
  by $N^w_r$ the topological closure\footnote{here we use the notation of \cite{OSch}.} of $N$ in $\cN^w_r$, we get by \cite[5.6.5]{OS2} finitely many elements  $u \in U^-_{0}$ and 
  finitely many elements  $v \in T_{0}$ such that
    \begin{numequation}\label{equation_split} \cN_r^w \simeq \bigoplus_{u,v} \delta_u \delta_v \otimes N_r^w
  \end{numequation}
  and the action of $\frx \in \mathrm{Ad}(w^{-1}) (\fru_P)$ is given by
  \begin{equation*}
    \frx \cdot \sum_{u,v} \delta_u\delta_v \otimes m_{u,v}=\sum \delta_u \delta_v \otimes
    \mathrm{Ad}((uv)^{-1}(\frx))m_{u,v}.
  \end{equation*}
Indeed, by \cite[Prop. 3.3.4]{OS1} we have isomorphisms 
  $$D_r(w^{-1}I w)\cong D_r(w^{-1}I w \cap U^-_{0})\hat{\otimes} D_r(w^{-1}I w \cap T_0)\hat{\otimes} D_r(w^{-1}I w \cap U_{0})$$ which induce isomorphisms
  $$D_r(\frg,w^{-1}I w\cap U_0)\cong U_r(\fru^-_{0} )\hat{\otimes} U_r(\frt) \hat{\otimes} D_r(w^{-1}I w \cap U_0)$$ where $U_r(\fru^-_{0})$ is the closure of 
  $U(\fru^-_{0} )$ in $D_r(w^{-1}I w \cap U^-_{0})$ etc. Hence we get 
  \begin{numeqnarray}\label{decomposition_Iwahori}
    \nonumber& & D_r(w^{-1}I w) \otimes_{D(\frg, w^{-1}I w \cap U_0)} N \\ & = & D_r(w^{-1}I w \cap U^-_{0}) \hat{\otimes} D_r(w^{-1}I w \cap T_{0}) \otimes_{U_r(\fru^-_{0})\otimes U_r(\frt)} N_r^w.
 \end{numeqnarray}
    But by the discussion in \cite[(5.5.7)- (5.5.8)]{OS2}
  there are for the fixed open subgroup $H$ (which is contained in $w^{-1}I  w$) finitely many elements $u \in U^-_{0}$ such that $D_r(H \cap U^-_{0})= \bigoplus_{u} \delta_u U_r(\fru^-_{0}).$ On the other hand there are by the compactness of $I$ finitely many $u \in U^-_{0}$  
  such that $D_r(w^{-1}I w \cap U^-_{0})=\bigoplus_{u} \delta_u D_r(H \cap U^-_{0})$ which gives the above claim.

  Let $w \neq 1.$ Then $\mathrm{Ad}((uv)^{-1}(\mathrm{Ad}(w^{-1})\fru_P)$ contains an element in $\fru^-$.  Hence this element acts injectively on $N$  by the presentation (\ref{presentation_N}). By a similar argument as in  Step 1 of \cite[Theorem 5.7]{OS2} this element acts
  injectively on
  $N_r^w$, as well since each element in $N_r^w$ is again a power series in weight vectors. We conclude that $H^0(\mathrm{Ad}((uv)^{-1})(\mathrm{Ad}(w^{-1})(\fru_P)), N_r^w)=0$ for all $w\neq 1.$ 
  If $w=1$ and $u=1$ then $\mathrm{Ad}((uv)^{-1}((\mathrm{Ad}(w^{-1})\fru_P))=\fru_P$. But then there exists an element $x\in \fru_P$  attached to a simple root such that $\eta(x)\neq 0.$ Hence $x$ acts injectively,  as well. Indeed one checks this by elementary computations in $N$ or by applying \cite[Th, 4.1]{Kos2}.
    By the argument above we deduce again that $H^0(\mathrm{Ad}((v)^{-1})(\mathrm{Ad}(\fru_P)), N_r^1)=0$.
  
  If $u\neq 1$ then $\mathrm{Ad}((uv)^{-1})(\fru_P)) \nsubseteq\fru_P$. Hence there is some element in it which we can write as a sum $y+x$ with $y\in \fru^-$.
  Now let $v\in N$ and identify it with an element in  (\ref{presentation_N}). Then $y$ increases the degree whereas $x$ decreases the degree of the corresponding polynomial expression in the $y_\alpha$, $\alpha \in \Phi^+$. Hence $y+x$ acts injectively on $N$ and we argue as above.

  Hence by passing to the limit we
  get $H^0(\mathrm{Ad}(w^{-1})(\fru_P) , \cN^w)=0$ for all $w\in W$. The claim follows.  
  \end{proof}

  \begin{cor}
   With the above notation we have $\overline{H}_0(U_{P,0},X_{\hat{\eta}}(N,V))=0.$ (where $\overline{H}_0$ denotes the Hausdorff completion of $H_0$).
  \end{cor}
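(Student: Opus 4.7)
My plan is to derive the corollary directly from the proposition by Haar averaging over the compact group $U_{P,0}$, which reduces the Hausdorff coinvariants to the invariants.

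Since $U_{P,0} = G_0 \cap U_P$ is a compact $p$-adic Lie group, it carries a normalized Haar measure $dh$, and the locally analytic action of $U_{P,0}$ on the nuclear Fr\'echet space $X := X_{\hat{\eta}}(N,V)$ is in particular jointly continuous. Integration against Haar measure yields a continuous linear projection
\[
\pi\colon X \longrightarrow X^{U_{P,0}}, \qquad \pi(x) \,=\, \int_{U_{P,0}} h \cdot x \, dh,
\]
whose image is precisely $X^{U_{P,0}} = H^0(U_{P,0}, X)$ and which fixes $X^{U_{P,0}}$ pointwise.

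One then identifies the kernel of $\pi$ with $\overline{\fra \cdot X}$, where $\fra$ is the augmentation ideal of $D(U_{P,0})$. For one inclusion, the identity $x - \pi(x) = \int_{U_{P,0}} (1-h) x \, dh$ realizes $x - \pi(x)$ as a limit of convex combinations of elements $(1-h) x \in \fra \cdot X$, so $\ker \pi \subset \overline{\fra \cdot X}$. The reverse inclusion follows from the $U_{P,0}$-invariance of $\pi$, since $\pi((1-h)x) = \pi(x) - \pi(h\cdot x) = 0$. Consequently $\pi$ induces a topological isomorphism
\[
X^{U_{P,0}} \;\xrightarrow{\;\cong\;}\; X / \overline{\fra \cdot X} \;=\; \overline{H}_0(U_{P,0}, X).
\]

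By the preceding proposition, $H^0(U_{P,0}, X_{\hat{\eta}}(N,V)) = 0$, hence $\overline{H}_0(U_{P,0}, X_{\hat{\eta}}(N,V)) = 0$.

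The only technical point is verifying continuity of Haar integration and the kernel identification; both are standard for continuous actions of compact groups on Fr\'echet spaces, and require nothing beyond the existing structure of $X$ as a separately continuous $D(G_0)$-module. In particular, no new analysis of the Bruhat decomposition or the completions $N^w_r$ is needed beyond what was established in the proposition.
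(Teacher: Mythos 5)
Your Haar-averaging argument is correct, and the technical steps you flag (joint continuity of the $U_{P,0}$-action on the Fréchet space $X$, existence and continuity of the vector-valued integral) are standard for a compact group acting separately continuously on a Fréchet space; in particular the orbit maps are continuous, the family of translation operators is equicontinuous, and the Pettis integral converges. The identification $\ker\pi=\overline{\fra\cdot X}$ is argued correctly in both directions, and it yields the canonical isomorphism $\overline{H}_0(U_{P,0},X)\cong H^0(U_{P,0},X)$, after which the preceding proposition finishes the job.

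The paper's own proof is a single sentence: ``This follows by duality, cf.\ also [OSch, Thm.\ 3.5],'' i.e.\ it routes through a cited abstract duality statement relating $\overline{H}_0$ and $H^0$ for a nuclear Fr\'echet module and its compact-type dual, rather than constructing the averaging projector explicitly. That duality, specialized to a compact group, is grounded in exactly the Haar averaging you carried out, so in substance the two proofs rest on the same principle; the difference is presentational. Your version has the advantage of being elementary and entirely self-contained --- it does not require the reader to unpack the cited duality theorem --- at the cost of re-deriving a compact-group special case of it. This is a perfectly acceptable and arguably clearer alternative.
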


  \begin{proof}
   This follows by duality, cf. also \cite[Theorem 3.5]{OSch}.
  \end{proof}

  \begin{prop}
 We have $\overline{H}_0(U_P,\cF^G_{\hat{\eta}}(N,V))=0.$
\end{prop}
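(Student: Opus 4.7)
My plan is to reduce to the vanishing in the preceding corollary by decomposing the compact induction $\cF^G_{\hat{\eta}}(N,V) = c-\Ind^G_{G_0}(X')$ as a $U_P$-representation, where $X' = X_{\hat{\eta}}(N,V)'$. I invoke the Iwasawa decomposition $G = U_P L_P G_0$ to identify
\[
U_P \backslash G / G_0 \;\cong\; L_P / L_{P,0}, \qquad L_{P,0} = L_P \cap G_0.
\]
Because $L_P$ normalises $U_P$, for each representative $\ell \in L_P$ the stabiliser $C_\ell := U_P \cap \ell G_0 \ell^{-1}$ equals $\ell U_{P,0}\ell^{-1}$, an open compact subgroup of $U_P$ conjugate to $U_{P,0}$. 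The standard Mackey-type decomposition of the compact induction then yields a $U_P$-equivariant splitting
\[
\cF^G_{\hat{\eta}}(N,V)|_{U_P} \;=\; \bigoplus_{\ell \in L_P/L_{P,0}} \;\bigoplus_{u \in U_P/C_\ell} u\cdot(\delta_\ell \star X'),
\]
in which $\delta_\ell \star X'$ denotes $X'$ endowed with the twisted $C_\ell$-action $c \mapsto (\ell^{-1} c \ell)\cdot(\cdot)$ coming from $G_0$.

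Next, since $U_P$ normalises $\fru_P$, the Lie algebra $\fru_P$ preserves each summand $u\cdot(\delta_\ell\star X')$, acting through the automorphism $\Ad(u^{-1}\ell^{-1})$ of $\fru_P$. Because inner automorphisms do not change Lie algebra coinvariants, the Hausdorff coinvariants of each summand are isomorphic to $\overline{H}_0(U_{P,0}, X')$, and the proof reduces to establishing
\[
\overline{H}_0\bigl(U_{P,0},\, X_{\hat{\eta}}(N,V)'\bigr) \;=\; 0.
\]

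Finally, this vanishing is the dual counterpart of Proposition~6.1: $X = X_{\hat{\eta}}(N,V)$ is a nuclear Fréchet space and $X'$ is of compact type, so the canonical identification
\[
(X/\overline{\fru_P X})' \;=\; (X')^{\fru_P}
\]
translates $H^0(\fru_P, X) = 0$ into the required $\overline{H}_0(\fru_P, X') = 0$. I expect the principal obstacle to lie in the first step, namely in justifying the interchange of the Hausdorff coinvariants functor with the (countable) locally convex direct sum above. This amounts to showing that the closure of $\fru_P\cdot\cF^G_{\hat\eta}(N,V)$ respects the summand decomposition, which in turn depends on the strict-inductive-limit topology of the compact induction, available here because $G$ is second countable.
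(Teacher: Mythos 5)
Your proof is correct and in essence follows the same route as the paper: decompose $\cF^G_{\hat{\eta}}(N,V)=c\text{-}\Ind^G_{G_0}(X')$ into a locally convex direct sum of translates of $X'=X_{\hat{\eta}}(N,V)'$, pass the Hausdorff coinvariants through the sum, reduce to vanishing for a single translate, and invoke the corollary to Proposition 6.1 via duality. The one genuine difference is how the reduction to a single copy of $X'$ is organised. The paper decomposes simply over $G/G_0$; for a summand $gX'$ one then gets $\overline{H}_0(\fru_P,gX')=\overline{H}_0(\Ad(g^{\pm 1})\fru_P,X')$, and the corollary is applied to the conjugate parabolic $g^{-1}Pg$ (this is why the paper emphasises that $P$ there ranges over \emph{all} parabolic subgroups, not just standard ones). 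You instead refine the decomposition using the Iwasawa decomposition $G=U_P L_P G_0$, which lets you take coset representatives inside $P=U_P L_P$; since $P$ normalises $\fru_P$, every twist then satisfies $\Ad(g^{-1})\fru_P=\fru_P$ and only the fixed parabolic $P$ is ever needed. This is a mild streamlining, bought at the price of justifying the identification $U_P\backslash G/G_0\cong L_P/L_{P,0}$ and the Shapiro-type reduction of $U_P$-coinvariants of an induced module to $C_\ell$-coinvariants. Two small slips worth fixing: the phrase ``inner automorphisms do not change Lie algebra coinvariants'' is not quite what you want, since $\Ad(u^{-1}\ell^{-1})$ is not an inner automorphism of $\fru_P$; what matters is merely that it is an automorphism of $\frg$ stabilising the subalgebra $\fru_P$, and twisting a module by such an automorphism leaves $\fru_P$-coinvariants unchanged because $\Ad(u^{-1}\ell^{-1})(\fru_P)\cdot X'=\fru_P\cdot X'$. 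And the displayed duality should read $\bigl(X'/\overline{\fru_P X'}\bigr)'\cong X^{\fru_P}$ (using reflexivity of the nuclear Fréchet space $X$), from which $H^0(\fru_P,X)=0$ gives $\overline{H}_0(\fru_P,X')=0$ by Hahn--Banach; as written you have $X$ and $X'$ interchanged. Your concern about commuting $\overline{H}_0$ with the countable locally convex direct sum is reasonable but not a real obstacle — each $X'/\overline{\fru_P X'}$ is Hausdorff, hence so is $\bigoplus_g X'/\overline{\fru_P X'}$, which forces $\overline{\bigoplus_g \fru_P X'}=\bigoplus_g\overline{\fru_P X'}$; the paper treats this as immediate.
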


  \begin{proof}
  Since $c-\Ind^G_{G_0}(X_{\hat{\eta}}(N,V)')$ is a direct sum $\bigoplus_{g\in G/G_0} gX_{\hat{\eta}}(N,V)'$ of copies of $X_{\hat{\eta}}(N,V)'$ and $\overline{H}_0(\fru_p,gX_{\hat{\eta}}(N,V)')=\overline{H}_0(Ad(g)(\fru_P),X_{\hat{\eta}}(N,V)')$  it suffices to see that $\overline{H}_0(\fru_P, X_{\hat{\eta}}(N,V)')=0$ since we consider from the very beginning arbitrary parabolic subgroups $P.$ Now we apply the above corollary.
\end{proof}

In the following we discuss an example which shows that the homological vanishing criterion of Kohlhaase is not satisfied in general. For this let $G=\GL_2.$

  \begin{prop}
 We have  $H_1(U^-,\cF^G_{\hat{\eta}}(N))\neq 0$.
\end{prop}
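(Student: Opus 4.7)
The plan is to produce a nonzero element of $H_1(\fru^-, \cF^G(N)) = \ker\bigl(y : \cF^G(N) \to \cF^G(N)\bigr)$, which is the identification furnished by the Chevalley--Eilenberg complex for the one-dimensional abelian Lie algebra $\fru^- = Ky$. A direct calculation (mirroring the one at the end of the preceding proposition) shows that on each summand $V_g$ of the locally convex decomposition $\cF^G(N) = \bigoplus_{g \in G/G_0} V_g$ with $V_g \cong X'$, the Lie algebra $y$ acts as $\Ad(g^{-1}) y$ on $X'$; so it suffices to exhibit a nonzero $\phi \in X'$ with $y \phi = 0$ (which will give a class from the identity summand $V_1$).

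First, I specialize Theorem~\ref{free} to $\GL_2$, where $|W|=2$, to identify $N = N_{\Theta,\eta}$ as a free module of rank $2$ over $U(\fru^-) = K[y]$ with basis $\{v^+, h v^+\}$ (the images of $1$ and $h$). Using $yh = hy + 2y$ one checks that in this basis $y$ acts via the upper-triangular matrix $\bigl(\begin{smallmatrix}y & 2y \\ 0 & y\end{smallmatrix}\bigr)$, so that $N/yN \cong K \cdot [v^+] \oplus K \cdot [hv^+]$ is two-dimensional. In particular there is a nonzero functional $\lambda : N \to K$ annihilating $yN$ and satisfying $\lambda(hv^+) \neq 0$.

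Second, I extend $\lambda$ to a continuous functional $\phi \in X'$ with $y\phi = 0$. Realizing $X'$ as a closed subspace of $\Ind^{G_0}_{U_0}(K'_{\hat\eta})$ cut out by the condition $Z(\frg) = \Theta$, the equation $y\phi = 0$ translates into left $\fru^-$-invariance of $\phi$ viewed as a locally analytic function $G_0 \to K$, in addition to the right transformation law $\phi(gu) = \hat\eta(u)^{-1}\phi(g)$ for $u \in U_0$. An elementary check (as I sketched above for the small Bruhat cell $B_0$) shows that the combination of right $\hat\eta^{-1}$-transformation and infinitesimal left $U$-invariance forces $\phi$ to vanish on $B_0$; but on the large Bruhat cell containing $w_0$ the two conditions are compatible and leave a family of solutions whose dimension is governed exactly by $\dim(N/yN) = 2$, so that any prescribed restriction $\phi|_N = \lambda$ can be realized. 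The central character condition $Z(\frg) = \Theta$ adds a single compatibility equation that still leaves a nontrivial $\phi$.

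Third, the corresponding compactly-supported function $F \in V_1 \subset \cF^G(N)$ with $F(1) = \phi$ satisfies $y F = 0$ and is nonzero, yielding a nontrivial class in $H_1(\fru^-, \cF^G(N))$ and completing the proof.

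The main obstacle will be the second step, namely, rigorously constructing (and verifying the non-vanishing of) the locally analytic co-Whittaker-type functional $\phi$ satisfying the three compatibility conditions; this is a locally analytic analog of the classical construction of left $U^-$-invariant Whittaker functionals on $\GL_2$ and requires a careful combinatorial analysis of $U_0 \backslash G_0 / U_0$ together with the $Z(\frg)$-condition. The two-dimensionality of $N/yN$ (specific to $\GL_2$, coming from the rank $|W| = 2$ in the freeness of $U(\frg)$ over $Z(\frg) \otimes U(\fru)$) is precisely the structural input that provides the degrees of freedom to construct such a $\phi$, explaining the restriction of the proposition to the $\GL_2$ case.
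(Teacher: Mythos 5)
Your overall plan — reduce to the Chevalley--Eilenberg complex for the one-dimensional $\fru^- = Ky$, so that $H_1(\fru^-,\cdot) = \ker(y)$, and then produce a nonzero $y$-invariant vector — is a natural and essentially correct starting point. However there are two issues, one of which is a genuine gap that your proof explicitly leaves unresolved.

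First, a smaller omission: the proposition concerns the \emph{group} homology $H_1(U^-,\cdot)$, not the Lie algebra homology $H_1(\fru^-,\cdot)$. These are not a priori the same; the paper passes from one to the other via Kohlhaase's \cite[Thm.~7.1]{K2}, which identifies $H_1(U^-,c\text{-}\Ind^G_{G_0}(X'))$ with the $U^-$-coinvariants of $H_1(\fru^-, c\text{-}\Ind^G_{G_0}(X'))$. Your writeup silently conflates the two.

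Second, and decisively: your Step 2 — extending a linear functional $\lambda$ on the two-dimensional space $N/yN$ to a \emph{continuous} $y$-invariant functional $\phi \in X'$ — is precisely the hard part and is not proven. You acknowledge this yourself (``The main obstacle will be the second step'') and offer only a heuristic about where $\phi$ should vanish on the Bruhat decomposition and how the $Z(\frg) = \Theta$ condition ``still leaves a nontrivial $\phi$.'' This is where the proof actually needs to be done, and it is exactly what the paper circumvents by working on the pre-dual side. Instead of constructing the functional, the paper proves that $\overline{H}_0(\fru^-, X_{\hat\eta}(N)) \neq 0$, from which the existence of a nonzero $y$-invariant functional follows by duality. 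The key ingredients are: (1) $\fru^-$ is an Ore set, so $\fru^- D(G_0) = D(G_0)(\fru^-)^k$ for some $k$; (2) $N/(\fru^-)^k N$ is finite-dimensional, which makes $D(G_0) \otimes_{U(\fru^-)} (N/(\fru^-)^k N)$ Hausdorff; (3) the surjection $D(G_0)\otimes_{U(\fru^-)} N \to D(G_0)\otimes_{D(\frg,U_0)} N$ is strict, so the quotient by $\fru^-$ on the target is also Hausdorff, giving $H_0 = \overline{H}_0$; and (4) the Whittaker vector has nonzero image in $N/(\fru^-)^k N$, giving nonvanishing. None of these appear in your argument, and without something of this nature, Step 2 remains an unfilled hole.

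Two smaller remarks: the matrix $\bigl(\begin{smallmatrix}y & 2y\\ 0 & y\end{smallmatrix}\bigr)$ you wrote down does not describe the action of $y$ in the $K[y]$-basis $\{v^+, hv^+\}$ (it is simply diagonal multiplication by $y$; check $y\cdot(p(y)v^+ + q(y)hv^+) = yp(y)v^+ + yq(y)hv^+$), though the conclusion $\dim_K N/yN = 2$ is still correct. And the restriction to $\GL_2$ is not really because $|W| = 2$ as you suggest; the relevant feature in the paper is that $\dim\fru^- = 1$, so the Lie algebra homology is computed by a two-term complex. The paper's Hausdorffness argument only needs $N/(\fru^-)^k N$ finite-dimensional, which holds generally.
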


\begin{proof} By  \cite[Theorem 7.1]{K2} we have the identity $H_1(U^-,c-\Ind^G_{G_0}(X_{\hat{\eta}}(N)')=H_1(\fru^-, c-\Ind^G_{G_0}(X_{\hat{\eta}}(N)'))_{U^-}$. 
 Since $\dim \fru=1$ we conclude that $H_i(\fru^-, c-\Ind^G_{G_0}(X_{\hat{\eta}}(N)')$ is the cohomology of the complex
 $$y:C^1 \to C^0 $$ where $C^0=C^1=\Ind^G_{G_0}(X_{\hat{\eta}}(N)')$ and $y$ denotes multiplication by $y\in \fru^-.$ 
 
 Thus $H_1(\fru^-, c-\Ind^G_{G_0}(X_{\hat{\eta}}(N)'))$ coincides with
 $H^0(\fru^-, c-\Ind^G_{G_0}(X_{\hat{\eta}}(N)'))$ which is isomorphic
 to the topological dual of $\overline{H}_0(\fru^-, c-\Ind^G_{G_0}(X_{\hat{\eta}}(N)).$ But 
  \begin{eqnarray*}                                                                                                    
H_0(\fru^-,c-\Ind^G_{G_0}(X_{\hat{\eta}}(N))) & = & c-\Ind^G_{G_0}(X_{\hat{\eta}}(N))/\fru^-c-\Ind^G_{G_0}(X_{\hat{\eta}}(N))\\ & = &c-\Ind^G_{G_0}(X_{\hat{\eta}}(N))/\fru^- X_{\hat{\eta}}(N)) 
\end{eqnarray*}
by the exactness of $c-\Ind^G_{G_0}.$ Further
 $\fru^-D(G_0) = D(G_0)(\fru^-)^k N$ for some integer $k\geq 1$ since $\fru^-$ is a Ore set (cf. the proof of \cite[Lemma 4.2]{Ma}). But $N/(\fru^-)^k N$ is a finite-dimensional vector space.  In particular we see that $$D(G_0) \otimes_{U(\fru^-)} N/\fru^-D(G_0)\otimes_{U(\fru^-)} N=D(G_0)\otimes_{U(\fru^-)} (N/(\fru^-)^k N) $$ is a Hausdorff space. But the natural epimorphism
 $D(G_0) \otimes_{U(\fru^-)} N \to D(G_0) \otimes_{D(\frg,U_0)} N$ is strict. It follows by the obvious diagram chasing diagram that  $D(G_0) \otimes_{D(\frg,U_0)} N / \fru^-D(G_0) \otimes_{D(\frg,U_0)} N$ has to be Hausdorff, as well.
  Thus we conclude that
 $H_0(\fru^-, c-\Ind^G_{G_0}(X_{\hat{\eta}}(N))=\overline{H}_0(\fru^-, c-\Ind^G_{G_0}(X_{\hat{\eta}}(N)).$ Moreover, the image of the Whittaker vector in $N/(\fru^-)^k N$ does not vanish. It follows that  $D(G_0) \otimes_{D(\frg,U_0)} N / \fru^-D(G_0) \otimes_{D(\frg,U_0)} N\neq 0.$ Indeed by (\ref{decomposition_Bruhat}) we may suppose that $G_0$ is $I.$ Then we use the decomposition (\ref{decomposition_Iwahori}) in order to deduce the claim. This finishes the proof.
  \end{proof}

\vskip18pt

\bibliographystyle{plain}
\bibliography{Whittaker}

\end{document}